\newcommand{\vol}{{\rm Vol}}
\newcommand{\e}{\mathbb{E}}
\newcommand{\p}{\mathbb{P}}
\theoremstyle{plain}
\newtheorem{theorem}{Theorem}
\newtheorem{lemma}[theorem]{Lemma}
\newtheorem{cor}[theorem]{Corollary}
\newtheorem{prop}[theorem]{Proposition}
\theoremstyle{definition}
\theoremstyle{remark}
\title{\bf Many edge-disjoint rainbow spanning trees\\ in general graphs}
\author{Paul Horn\thanks{Research supported in part by NSA Young Investigator Grant H98230-15-1-0258.}\\
\small Department of Mathematics\\[-0.8ex]
\small University of Denver\\[-0.8ex] 
\small Denver, CO, U.S.A.\\
\small\tt Paul.Horn@du.edu\\
\and
Lauren M. Nelsen\\
\small Department of Mathematics\\[-0.8ex]
\small University of Denver\\[-0.8ex]
\small Denver, CO, U.S.A.\\
\small\tt Lauren.Nelsen@du.edu
}
\date{}
\begin{document}

\maketitle

\begin{abstract}
A rainbow spanning tree in an edge-colored graph is a spanning tree in which each edge is a different color.  Carraher, Hartke, and Horn showed that for $n$ and $C$ large enough, if $G$ is an edge-colored copy of $K_n$ in which each color class has size at most $n/2$, then $G$ has at least $\lfloor n/(C\log n)\rfloor$ edge-disjoint rainbow spanning trees.  Here we strengthen this result by showing that if $G$ is \emph{any} edge-colored graph with $n$ vertices in which each color appears on at most $\delta\cdot\lambda_1/2$ edges, where $\delta\geq C\log n$ for $n$ and $C$ sufficiently large and $\lambda_1$ is the second-smallest eigenvalue of the normalized Laplacian matrix of $G$, then $G$ contains at least $\left\lfloor\frac{\delta\cdot\lambda_1}{C\log n}\right\rfloor$ edge-disjoint rainbow spanning trees.

  \bigskip\noindent \textbf{Keywords:} Rainbow spanning tree, Cheeger inequality
\end{abstract}

\section{Introduction}

For an edge-colored graph $G$, a \textit{rainbow spanning tree} of $G$ is a spanning tree in which each edge is a different color.  Here we show that for {\it any} edge-colored graph $G$ on sufficiently many vertices and with large enough minimum degree, we can give a lower bound on the number of edge-disjoint rainbow spanning trees in $G$.  This lower bound will depend on an eigenvalue of a matrix associated with $G$ known as the normalized Laplacian matrix, which we will define later.

Our motivation is a conjecture of Brualdi and Hollingsworth, which says if $K_n$ (for $n\geq 6$ and $n$ even) is edge-colored such that each color class is a perfect matching, then there is a decomposition of the edges into $n/2$ edge-disjoint rainbow spanning trees (\cite{BrualdiHollingsworth}).  Progress was slow: they proved that in any such edge-colored $K_n$, there are at least two edge-disjoint rainbow spanning trees.  Krussel, Marshall, and Verrall showed that there are at least three edge-disjoint rainbow spanning trees (\cite{KMV}).  Horn (\cite{Horn}) showed that under these hypotheses, a postitve fraction of the graph can be covered by edge-disjoint rainbow spanning trees.  This is the best known result for the conjecture of Brualdi and Hollingsworth.  Recently, Fu, Lo, Perry, and Rodger gave a constructive proof that in a properly edge-colored $K_n$ (where $n$ is even) there is a decomposition of the edges into at least $\lfloor\sqrt{3n+9}/3\rfloor$ edge-disjoint rainbow spanning trees (\cite{FuLoPerryRodger}).  

Strengthening the conjecture of Brualdi and Hollingsworth, Kaneko, Kano, and Suzuki conjectured that if $G$ is a properly edge-colored $K_n$ where $n\geq 6$ and $n$ is even, then there is a decomposition of the edges into at least $\lfloor n/2\rfloor$ edge-disjoint rainbow spanning trees (\cite{KKS}).  Also related, Akbari and Alipour showed in \cite{AkbariAlipour} that if $G$ is an edge-colored $K_n$ ($n\geq 5$) in which each color appears at most $n/2$ times, then $G$ contains at least two edge-disjoint rainbow spanning trees.  Carraher, Hartke, and Horn showed in \cite{CHH} that for $n$ and $C$ sufficiently large, if $G$ is an edge-colored copy of $K_n$ in which each color appears less than $n/2$ times, then $G$ contains at least $\lfloor n/(C\log n)\rfloor$ edge-disjoint rainbow spanning trees.  This result is currently the best known for the conjecture of Kaneko, Kano, and Suzuki.  

There are a number of results about rainbow structures other than spanning trees in edge-colored graphs.  Kano and Li did a survey of many results and conjectures about such structures in \cite{KanoLi}.  Brualdi and Hollingsworth looked at edge-colored complete bipartite graphs and proved results about when such graphs contain rainbow forests or trees (\cite{BrualdiHollingsworth2}).  Constantine showed that for $p$ prime ($p>2$), there is some proper edge-coloring of the complete graph $K_p$  such that there is a partition of the edges of $K_p$ into rainbow hamiltonian cycles (\cite{Constantine}).  He also showed that for certain values of $n$, there is a proper edge-coloring of $K_n$ such that there is a partition of the edges of $K_n$ into isomorphic rainbow spanning trees.  Rainbow cycles in graphs have also been studied.  Albert, Frieze, and Reed showed that for $n$ sufficiently large, if $K_n$ is edge-colored such that each color appears less that $n/64$ times, then there is a rainbow hamiltonian cycle (\cite{AlbertFriezeReed}).  (Rue gave a correction of this constant -- see \cite{FriezeKrivelevich}.)  Frieze and Krivelevich  proved that there is a constant $c>0$ such that an edge-coloring of $K_n$ in which each color appears at most $\max\{cn,1\}$ times contains a rainbow cycle of length $k$ for each $3\leq k\leq n$ (\cite{FriezeKrivelevich}).

For a vertex, $v$, let $d_v$ denote the degree of $v$.  The normalized Laplacian of a graph $G$ is the matrix $\mathcal{L}$, whose entries are given by the following:

$$\mathcal{L}(u,v)=\begin{cases}  
1 & \text{ if } u=v\text{ and } d_v\neq 0\\
-\frac{1}{\sqrt{d_v\cdot d_u}} & \text{ if } u\sim v \\
0 & \text{ otherwise.}
\end{cases}$$

(The monograph of Chung, \cite{Chung}, gives significantly more information about the normalized Laplacian.)  The eigenvalues of $\mathcal{L}$ are $0=\lambda_0\leq \lambda_1\leq\cdots\leq \lambda_{n-1}\leq 2$.  We show the following:

\begin{theorem} \label{t1}
If $G$ is an edge-colored graph with minimum degree $\delta\geq C\log n$ (for $C$ and $n$ sufficiently large) in which each color class has size at most $\delta\cdot\lambda_1/2$, then $G$ contains at least $\left\lfloor\frac{\delta\cdot\lambda_1}{C\log n}\right\rfloor$ edge-disjoint rainbow spanning trees.  
\end{theorem}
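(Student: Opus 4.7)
The plan is to adapt the random edge-partitioning argument of Carraher, Hartke, and Horn~\cite{CHH}, using $\lambda_1$-based expansion in place of the dense edge counts available in $K_n$. The matroid-theoretic ingredient I would rely on is the classical criterion (a corollary of Edmonds' matroid intersection theorem applied to the graphic matroid and the color partition matroid) that an edge-colored graph $H$ has a rainbow spanning tree if and only if, for every partition $V(H) = V_1 \sqcup \cdots \sqcup V_r$, the edges crossing the partition use at least $r-1$ distinct colors. This reduces the theorem to producing an edge-decomposition $E(G) = H_1 \sqcup \cdots \sqcup H_k$ with $k = \lfloor \delta\lambda_1/(C\log n)\rfloor$ in which each $H_i$, equipped with the original coloring, meets this rainbow-cut criterion.

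I would construct the decomposition by assigning each edge of $G$ independently and uniformly to one of the $k$ buckets. The expansion input, needed to show that each bucket inherits enough crossing edges in many colors, is the Cheeger-type bound
\[
\sum_{i<j} e(V_i,V_j) \;\geq\; \frac{\lambda_1}{2}\sum_{i} \vol(V_i)\!\left(1 - \frac{\vol(V_i)}{\vol(G)}\right)
\]
for the normalized Laplacian. With $\delta \geq C\log n$, the right-hand side is of order $\delta\lambda_1$ times the number of blocks of nontrivial volume, and the color-class bound $\delta\lambda_1/2$ then forces the cross-edges of any partition to span many distinct colors (comfortably more than $r-1$ with logarithmic slack).

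The final step is concentration plus a union bound. For fixed $\mathcal{P}$ and bucket $H_i$, a Chernoff-type argument shows that $H_i$ retains at least $r-1$ distinct crossing colors except with probability that decays exponentially in $(r-1)\log n$. The scaling $k \asymp \delta\lambda_1/(C\log n)$ is tuned precisely so that this failure probability beats the number of $r$-partitions of $[n]$ in a union bound over $\mathcal{P}$, over $i$, and over $r$, leaving positive probability that every $H_i$ satisfies the rainbow-cut criterion and therefore contains a rainbow spanning tree.

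The technical heart, and where I expect the main obstacle, is executing this union bound uniformly across all scales of partition. Concentration is weakest for partitions containing a block of very small volume (potentially as low as $\delta$), while the sheer count of partitions explodes when $r$ is close to $n$. Handling these regimes simultaneously will require stratifying the partitions by the volume of their smallest block and calibrating the Chernoff bounds separately in each stratum; this is precisely where the hypothesis $\delta \geq C\log n$ is genuinely needed, as it supplies enough randomness at each vertex for the concentration to survive the count of small-block partitions.
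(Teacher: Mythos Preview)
Your overall architecture matches the paper exactly: random edge-partition into $k=\lfloor \delta\lambda_1/(C\log n)\rfloor$ buckets, Schrijver's rainbow-partition criterion (the paper's Proposition~\ref{RSTexistence}), Cheeger-based lower bounds on cut sizes, and Chernoff plus union bound. So the plan is correct in spirit.

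There is, however, a real gap in the outline at the step where you assert that the $\lambda_1$-expansion bound ``forces the cross-edges of any partition to span many distinct colors (comfortably more than $r-1$ with logarithmic slack).'' Cheeger alone does \emph{not} give this when $r$ is close to $n$. Summing $e(P_i,\bar P_i)\ge\frac{\lambda_1}{2}\vol(P_i)$ over all parts yields only $e_{G}(\mathcal P)\ge\frac{\lambda_1}{2}|E(G)|$, hence about $\frac{C n\log n}{4}$ expected crossing edges per bucket; dividing by the per-bucket color-class cap $(1+\epsilon)\frac{C\log n}{2}$ gives only about $n/2$ colors, which is short of $r-1$ once $r>n/2$. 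Your proposed stratification by the volume of the smallest block does not repair this, because the shortfall is global, not concentrated in one small block.

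The paper's fix is to supplement Cheeger with the trivial bound $e(S,\bar S)\ge \vol(S)-2\binom{|S|}{2}$, which for small $|S|$ (in particular for singletons) is far stronger---roughly $\delta$ instead of $\frac{\lambda_1\delta}{2}$. They package the two bounds into $g(z)=\max\{\lambda_1\delta z/2,\ \delta z-z(z-1)\}$ and prove a convexity lemma (Lemma~\ref{EdgesConcavity} and Corollary~\ref{concavity}) that lets them redistribute part sizes to extremes and extract an additive gain of order $\delta x$ from the $x$ singleton parts. This is what pushes the edge count high enough to beat $(r-1)$ colors for $r$ up to roughly $\frac{1-\epsilon}{1+4\epsilon}n$. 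For $r$ between there and $n-3$ they abandon the deterministic edge-count approach entirely and run a direct probabilistic argument on the event that $s-r+1$ color classes simultaneously miss all cross-edges (Lemma~\ref{ProbLemma}); and for $r\in\{n-2,n-1,n\}$ they group small color classes into ``pseudocolors'' of size at least $n/4$ to restore concentration. None of these three devices---the two-branch lower bound $g$, the color-vanishing union bound, or the pseudocolor trick---appears in your outline, and at least the first is essential: without it the argument genuinely fails for partitions with many parts.
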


We emphasize that our theorem works for both regular and irregular graphs; use of spectral methods frequently restrict results to apply only for irregular graphs.  We also emphasize that the colorings considered in our theorem need not be proper -- there is only a restriction on the multiplicity of a color.  Another advantage of our approach is that it uses only the smallest eigenvalue.  Results in extremal combinatorics using spectral graph theory frequently assume strong control on both the smallest (non-trivial) and largest eigenvalue of $\mathcal{L}$ as such gives stronger pseudo-random properties of the edge set of a graph via the expander mixing lemma.  We also note that our result does not actually require $\lambda_1$ to be close to $1$ (another common requirement), although our result is certainly strongest if $\lambda_{1}$ is close to $1$.  Some additional comments regarding the hypothesis of our results are given in Section \ref{sec5}.   

The paper is organized as follows:  Sections \ref{sec:db} and \ref{sec3} introduce definitions and preliminary results.  The proof of Theorem \ref{t1} is in Section \ref{sec4}, and we conclude in Section \ref{sec5} with some discussion, along with some applications of Theorem \ref{t1} to particular classes of graphs where it yields particularly strong results.

\section{Definitions and Background} \label{sec:db}

For $k,\ell\in\mathbb{N}$, $[k]$  denotes the set $\{1,...,k\}$, and ${[k]\choose \ell}$ the collection of all subsets of $[k]$ of size $\ell$.  Let $q=\lfloor \delta\cdot\lambda_1/(C\log n)\rfloor$  where $n$ and $C$ are sufficiently large.  Our proof proceeds by constructing $G_1,...,G_q$ as follows:  for each edge in $G$, independently and uniformly select a $G_i$ with probability $1/q$, and add that edge to $G_i$.  Then the set $\{G_1,...,G_q\}$ forms a partition of the edges of $G$.  We show that $G_1,...,G_q$ each contain a rainbow spanning tree with high probability.  For a subset $S\subseteq V(G)$, let $e_j(S,\overline{S})$ denote the number of edges in $G_j$ with one end in $S$ and the other in $\overline{S}$.


We will let $C_1,...,C_s$ be the color classes, and for each $i\in [s]$, let $c_i=|C_i|$.  Assume $\delta\geq C\log n$, $p=\frac{C\log n}{\delta\cdot\lambda_1}$ and for each $i\in [s]$, $1\leq c_i\leq\frac{\delta\cdot\lambda_1}{2}$.  In order to show that each $G_i$ has a rainbow spanning tree, we use the following proposition, originally due to Schrijver \cite{Schrijver}.

\begin{prop}\label{RSTexistence}
A graph $G$ has a rainbow spanning tree if and only if for every partition $\mathcal{P}$ of $V(G)$ into $t$ parts, there are at least $t-1$ different colors represented between the parts of $\mathcal{P}$.
\end{prop}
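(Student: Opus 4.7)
The plan is to prove Proposition~\ref{RSTexistence} in two directions, with the nontrivial reverse implication handled via matroid intersection. For the forward direction ($\Rightarrow$), suppose $T$ is a rainbow spanning tree of $G$ and $\mathcal{P}$ is a partition of $V(G)$ into $t$ parts. Contracting each part of $\mathcal{P}$ inside $T$ yields a connected multigraph on $t$ vertices, which has at least $t-1$ edges. Thus at least $t-1$ edges of $T$ cross between parts of $\mathcal{P}$, and since $T$ is rainbow these edges display $t-1$ distinct colors.

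For the reverse direction ($\Leftarrow$), I would apply Edmonds' matroid intersection theorem to two matroids on $E(G)$. Let $M_1$ be the graphic matroid, so its rank function satisfies $r_1(A) = n - c(A)$, where $c(A)$ denotes the number of connected components of the spanning subgraph $(V(G), A)$. Let $M_2$ be the partition matroid in which $B \subseteq E(G)$ is independent iff it uses at most one edge from each color class, so $r_2(B)$ equals the number of distinct colors appearing on $B$. A rainbow spanning tree is exactly a common independent set of $M_1$ and $M_2$ of size $n-1$, and by matroid intersection the maximum common independent set has size
$$\min_{A \subseteq E(G)} \bigl( r_1(A) + r_2(E(G) \setminus A) \bigr).$$

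It therefore suffices to show that every $A \subseteq E(G)$ satisfies $r_1(A) + r_2(E(G) \setminus A) \geq n-1$. Fix such an $A$; its connected components form a partition $\mathcal{P}_A$ of $V(G)$ into $c(A)$ parts. Any edge of $G$ with endpoints in distinct parts of $\mathcal{P}_A$ must lie in $E(G) \setminus A$, since placing it in $A$ would merge two components. By the hypothesis of the proposition, the edges crossing $\mathcal{P}_A$ display at least $c(A) - 1$ distinct colors, so $r_2(E(G) \setminus A) \geq c(A) - 1$, and adding gives $r_1(A) + r_2(E(G) \setminus A) \geq (n - c(A)) + (c(A) - 1) = n - 1$. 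The main conceptual step is identifying the correct pair of matroids; once that is in place, the partition hypothesis translates directly into the rank-sum bound, and I expect no further technical obstacles. A self-contained alternative route would be an exchange/induction argument on spanning forests, but the matroid intersection framing is substantially cleaner.
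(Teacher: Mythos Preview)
Your proof is correct. The paper does not actually prove Proposition~\ref{RSTexistence}; it cites it as due to Schrijver, explaining that it follows from Edmonds' matroid intersection theorem applied as in Broersma--Li, with alternative graph-theoretic proofs by Suzuki and by Carraher--Hartke. Your argument is precisely this matroid-intersection route (graphic matroid meets the color partition matroid), so you have reproduced the approach the paper attributes to the literature.
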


Broersma and Li (\cite{BroersmaLi}) showed that the Matroid Intersection Theorem (\cite{Edmonds}) can be used to determine the largest rainbow spanning forest in a graph.  (See \cite{Schrijver}.)  Schrijver (\cite{Schrijver}) showed that the conditions of the Matroid Intersection Theorem are equivalent to the necessary and sufficient conditions from Proposition \ref{RSTexistence} for the existence of a rainbow spanning tree.  Suzuki (\cite{Suzuki}) and Carraher and Hartke (\cite{CarraherHartke}) provided additional graph theoretical proofs of this result.

Our strategy is to take our random partition of the edges of $G$ and prove some structural results that hold with high probability (Lemma \ref{ColorClassSize} below).  Then we show deterministically that each graph satisfies Proposition \ref{RSTexistence}.  The strategy is similar to that of \cite{CHH}, with additional technical difficulties given from the fact that our underlying graph is not complete and, instead, we only have spectral information to understand the geometry of the host graph $G$.  In some sense our primary new difficulty is to extract sufficient geometric information from the spectrum to push the analysis through.   


We will frequently use the fact that if $A_1,...,A_{\ell}$ are events, then
$$ \mathbb{P}\left[ \bigcup_{i=1}^{\ell} A_{\ell} \right]\leq\sum_{i=1}^{\ell}\mathbb{P}[A_i]. $$

We also use the following Chernoff bounds.

\begin{lemma}(\cite{Chernoff})\label{Chernoff}
If $X_i$ are independent random variables with 
$$\mathbb{P}(X_i=1)=p_i,\text{      } \mathbb{P}(X_i=0)=1-p_i$$
and $X=\sum_i X_i$, ($\mathbb{E}[X]=\sum_{i}p_i$) then
$$ \mathbb{P} [ X\leq\mathbb{E}[X]-\lambda ]\leq\exp\left( -\frac{\lambda^2}{2\mathbb{E}[X]} \right) $$
and
$$ \mathbb{P}[X\geq \mathbb{E}[X]+\lambda]\leq\exp\left( -\frac{\lambda^2}{2(\mathbb{E}[X]+\lambda/3)} \right). $$
\end{lemma}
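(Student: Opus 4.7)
The plan is to follow the random partition strategy sketched in Section~\ref{sec:db}: independently assign each edge of $G$ to one of $G_1, \ldots, G_q$ uniformly, where $q = \lfloor \delta \lambda_1 / (C\log n) \rfloor$, and show that every $G_j$ contains a rainbow spanning tree with high probability. By Proposition~\ref{RSTexistence}, it suffices to prove that for each $j\in[q]$ and each partition $\mathcal{P}=\{V_1,\ldots,V_t\}$ of $V(G)$, at least $t-1$ distinct colors appear among the edges of $G_j$ crossing $\mathcal{P}$. A union bound over $j$ then yields $q$ edge-disjoint rainbow spanning trees deterministically from a single sample of the random partition.

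I would first establish two auxiliary random lemmas. The first is Lemma~\ref{ColorClassSize}: with probability $1-o(1)$, every color class $C_i$ restricted to any $G_j$ has size at most roughly $\max\{2pc_i,\,C'\log n\}$ where $p=1/q$. This follows from Chernoff (Lemma~\ref{Chernoff}) applied to the Bernoulli indicators of inclusion of each edge of $C_i$ in $G_j$, followed by a union bound over the $sq$ color-graph pairs; the choice $\delta\geq C\log n$ gives enough slack. The second is a spectral cut estimate: starting from the standard inequality
\[
e(S,\overline{S}) \;\geq\; \lambda_1 \cdot \frac{\vol(S)\,\vol(\overline{S})}{\vol(V(G))}
\]
(which follows from the Rayleigh quotient characterization of $\lambda_1$), another Chernoff application should show that with high probability, for every $S$ with $\vol(S)$ not too small, $e_j(S,\overline{S})$ is at least a constant multiple of $\lambda_1\vol(S)\vol(\overline{S})/(q\vol(V))$. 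For very small $S$, I would instead use the minimum-degree hypothesis $\delta\geq C\log n$ directly to show $e_j(S,\overline{S})$ concentrates around $p\cdot e(S,\overline{S})\geq p\cdot(\delta|S|-2|E(G[S])|)$.

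Given these tools, the deterministic finish goes as follows. Fix $G_j$ and $\mathcal{P}=\{V_1,\ldots,V_t\}$. Summing the per-part spectral estimate over $i$ gives a lower bound on the number of crossing edges of $G_j$ that is roughly $(\lambda_1/q)\cdot\vol(V(G))$ (up to factors depending on how balanced the $V_i$ are). Dividing by the maximum color-class size in $G_j$, which Lemma~\ref{ColorClassSize} bounds by $p\cdot \delta\lambda_1/2 + O(\log n) = O(\log n)$ using the hypothesis $c_i\leq\delta\lambda_1/2$, gives a lower bound on the number of distinct colors crossing $\mathcal{P}$. The choice of $q$ and $p$ is calibrated precisely so that this bound exceeds $t-1$ in every regime.

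The main obstacle is controlling the union bound over all partitions of $V(G)$: there are roughly $n^n$ such partitions, which swamps any single-partition Chernoff tail. The remedy, as in \cite{CHH}, is to split the analysis by the type of partition. For partitions with few parts (small $t$), the number of partitions is small enough that tight concentration on $e_j(V_i,\overline{V_i})$ suffices. For partitions with many small parts, the key observation is that each color class is too small (at most $O(\log n)$ edges in each $G_j$) to cover many of the necessarily abundant crossing edges, so the color count is forced up deterministically from the edge count alone. Carefully extracting spectral geometry strong enough to handle intermediate $t$ --- where neither concentration nor the pigeonhole argument alone is tight --- and stitching the regimes together, is the technical heart of the proof, and is where the exact $C\log n$ denominator in $q$ will be verified to be optimal for this approach.
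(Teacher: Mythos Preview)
Your proposal does not address the stated lemma at all. Lemma~\ref{Chernoff} is the standard Chernoff tail bound for sums of independent Bernoulli variables; the paper does not prove it but simply cites \cite{Chernoff}. What you have written is instead an outline of the proof of Theorem~\ref{t1}, the main result of the paper. If the task was genuinely to prove Lemma~\ref{Chernoff}, the expected content would be the classical exponential-moment argument: for the upper tail, apply Markov's inequality to $e^{tX}$, use independence to factor $\e[e^{tX}]=\prod_i(1-p_i+p_ie^t)$, bound each factor by $\exp(p_i(e^t-1))$, and optimize over $t>0$; the lower tail is symmetric. None of this appears in your write-up.

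As a sketch of Theorem~\ref{t1} your outline is broadly aligned with the paper's strategy, but it is not a proof of the lemma you were asked about.
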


For a graph $G$, the volume of a subset $X\subset V(G)$, denoted by $\vol(X)$, is defined as follows: 
$$ \vol(X)=\sum_{v\in X}\deg(v). $$
For a subset $S$, of $V(G)$, we define $h_G(S)=\frac{|E(S,\overline{S})|}{\min\{ \vol(S),\vol(\overline{S}) \}}$.  The \textit{Cheeger constant} (or \textit{isoperimetric constant}), $h_G$ is then defined by  $$ h_G=\min_{S} h_G(S). $$

Determining $h_g$ is computationally difficult, but is related to the smallest eigenvalue through the following result known as \emph{Cheeger's inequality}.

\begin{theorem}(\cite{Chung})  If $G$ is a connected graph and $\lambda_1$ is the second-smallest eigenvalue of the normalized Laplacian of $G$, then

$$ \frac{h_G^2}{2}<\lambda_1\leq 2h_G. $$

\end{theorem}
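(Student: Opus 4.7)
The plan is to handle the two inequalities separately using the variational characterization of $\lambda_1$.  Writing the normalized Laplacian as $\mathcal{L}=I-D^{-1/2}AD^{-1/2}$ and substituting $g = D^{-1/2}f$, one obtains
$$\lambda_1 = \inf_{g}\frac{\sum_{uv\in E(G)}(g(u)-g(v))^2}{\sum_{v\in V(G)}g(v)^2 d_v},$$
where the infimum is over non-zero $g\colon V(G)\to\mathbb{R}$ with $\sum_{v}g(v)d_v=0$.

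For the upper bound $\lambda_1\le 2h_G$, I would let $S\subset V(G)$ realize $h_G$ with $\vol(S)\le\vol(\overline{S})$, and plug in the test function $g$ that equals $1/\vol(S)$ on $S$ and $-1/\vol(\overline{S})$ on $\overline{S}$.  This $g$ is orthogonal to $D\mathbf{1}$, and a direct computation shows that its Rayleigh quotient simplifies to $|E(S,\overline{S})|\cdot\left(1/\vol(S)+1/\vol(\overline{S})\right)\le 2|E(S,\overline{S})|/\vol(S) = 2h_G$, as desired.

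The harder direction is the classical Cheeger sweep.  Let $f$ be an eigenfunction of $\lambda_1$.  After the reduction described below, I obtain a function $g\ge 0$, not identically zero, with support of volume at most $\vol(G)/2$ and Rayleigh quotient at most $\lambda_1$.  Order the vertices so that $g(v_1)\ge\cdots\ge g(v_n)=0$ and set $S_i=\{v_1,\ldots,v_i\}$.  The two key estimates are the Cauchy--Schwarz bound
$$\left(\sum_{uv\in E}|g(u)^2-g(v)^2|\right)^2\le 2\lambda_1\left(\sum_{v}g(v)^2 d_v\right)^2,$$
obtained from the factorization $|g(u)^2-g(v)^2|=|g(u)-g(v)|\,(g(u)+g(v))$, and the co-area expansion
$$\sum_{uv\in E}|g(u)^2-g(v)^2|=\sum_{i=1}^{n-1}|E(S_i,\overline{S_i})|\,(g(v_i)^2-g(v_{i+1})^2)\ge h_G\sum_{v}g(v)^2 d_v,$$
where the last bound uses $|E(S_i,\overline{S_i})|\ge h_G\vol(S_i)$ (valid since $\vol(S_i)\le\vol(\overline{S_i})$ on the support of $g$) and then telescopes.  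Combining the two and dividing yields $h_G^2\le 2\lambda_1$, with strictness following because equality would force the eigenfunction to be a rescaled indicator of a level set, which is incompatible with $G$ being connected.

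The main obstacle is the initial reduction to such a $g$: given an eigenfunction $f$ for $\lambda_1$, we must produce a non-negative test function with small support while not increasing the Rayleigh quotient.  The standard trick is to work with the positive part of $f-c$ for an appropriate threshold $c$, using the orthogonality relation $\sum_{v}f(v)\sqrt{d_v}=0$ together with the fact that replacing $f$ by its positive and negative parts can only decrease the Dirichlet energy.  One then keeps whichever of the two one-sided parts is supported on the side of volume at most $\vol(G)/2$, which is always possible.  After this reduction the rest of the proof is the routine Cauchy--Schwarz and layer-cake computation sketched above.
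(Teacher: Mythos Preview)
The paper does not supply a proof of this statement; it is quoted as a known result from Chung's monograph \cite{Chung} and used as a black box in the rest of the argument.  So there is no ``paper's own proof'' to compare against.

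That said, what you have written is the standard derivation of the discrete Cheeger inequality and is essentially correct.  For the easy direction your test function is exactly the usual one, and the Rayleigh-quotient computation you indicate is right.  For the hard direction, the reduction step can be phrased slightly more cleanly than you do: one does not need a shift $f-c$.  If $f$ is a $\lambda_1$-eigenfunction (so $\sum_v f(v)d_v=0$), take $g=f_+$ or $g=f_-$, choosing whichever has support of volume at most $\vol(G)/2$; the eigenvalue equation, summed against $f$ over the support, gives directly that the Dirichlet quotient of this one-sided part is at most $\lambda_1$.  After that your Cauchy--Schwarz and layer-cake (co-area) steps go through verbatim.  Your remark on strictness is in the right spirit: equality in the Cauchy--Schwarz step forces $g$ to take only two values on its support, which for a connected graph is incompatible with $g$ arising from a genuine second eigenfunction.
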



\section{Preliminary Results} \label{sec3} 

We proceed by proving some preliminary results.  We begin by establishing some properties of each of the graphs $G_j$ constructed above.  

\begin{lemma}\label{ColorClassSize}
Fix $\epsilon=0.1$.  For every $j\in [q]$, the edge sets, $E_j$ of $G_j$ satisfy

\begin{enumerate}[(i)]
\item For every $i\in [s]$, $|E_j\cap C_i|\leq (1+\epsilon)\frac{C\log n}{2}$

\item [(ii)] For every set $S \subseteq V(G)$ with $\vol(S) \leq \frac{1}{2}\vol(G)$,
$$ e_j(S,\bar{S})\geq (1-\epsilon)\e[e(S,\bar{S})] $$
\item[(iii)] For every vertex $v \in V(G)$, 
\[
\deg_{G_j}(v) \geq (1-\epsilon) C \log n
\] 
\end{enumerate}
simultaneously with probability at least  $1-n^{-2}$, assuming $n$ is sufficiently large.
\end{lemma}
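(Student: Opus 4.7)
The strategy is the standard concentration-plus-union-bound template: for each of the three events, apply Lemma \ref{Chernoff} for a fixed index $j$, then union bound over every parameter that varies. Since each edge of $G$ is placed in $G_j$ independently with probability $p = 1/q = C\log n/(\delta \lambda_1)$, each of $|E_j \cap C_i|$, $e_j(S,\bar{S})$, and $\deg_{G_j}(v)$ is a sum of independent Bernoulli$(p)$ indicators, putting us directly in the setting of Lemma \ref{Chernoff}.

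Parts (i) and (iii) are routine. For (i), the hypothesis $c_i \leq \delta \lambda_1/2$ forces $\mathbb{E}[|E_j \cap C_i|] = p\,c_i \leq C\log n/2$, so the upper-tail Chernoff bound (after padding with dummy Bernoullis if $\mathbb{E}$ is strictly smaller) yields
\[
\mathbb{P}\bigl[|E_j \cap C_i| > (1+\epsilon)\,C\log n/2\bigr] \;\leq\; \exp(-\Omega(C\log n)) \;=\; n^{-\Omega(C)}.
\]
A union bound over the $s \leq \binom{n}{2}$ color classes and the $q \leq n$ graphs costs a factor of at most $n^3$, so for $C$ large this contribution is $\ll n^{-2}$. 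For (iii), $\mathbb{E}[\deg_{G_j}(v)] = p\,d_v \geq p\delta = C\log n/\lambda_1 \geq C\log n$ in the regime $\lambda_1 \leq 1$ of primary interest, and the lower-tail Chernoff bound gives $\mathbb{P}[\deg_{G_j}(v) \leq (1-\epsilon)\,C\log n] \leq n^{-\Omega(C)}$; a union bound over $n$ vertices and $q \leq n$ graphs again costs only $n^2$.

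Part (ii) is the main obstacle, because the union bound is over exponentially many subsets $S$. The key input is Cheeger's inequality: when $\vol(S) \leq \vol(G)/2$ it yields
\[
|E(S,\bar{S})| \;\geq\; h_G\,\vol(S) \;\geq\; \tfrac{\lambda_1}{2}\,\vol(S) \;\geq\; \tfrac{\lambda_1\,\delta}{2}\,|S|,
\]
so $\mathbb{E}[e_j(S,\bar{S})] \geq (C\log n/2)\,|S|$, i.e.\ it scales \emph{linearly} in $|S|$. Feeding this into the lower-tail Chernoff inequality with deviation $\epsilon \cdot \mathbb{E}[e_j(S,\bar{S})]$ gives a per-subset failure probability of $\exp(-\Omega(C|S|\log n)) = n^{-\Omega(C|S|)}$. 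Summing over all subsets of size $k$ loses a factor of $\binom{n}{k} \leq n^k$, so the contribution at size $k$ is $n^{-k(\Omega(C)-1)}$; the resulting geometric series in $k$ is bounded by $O(n^{-5})$ once $C$ is sufficiently large, and a final union over $j \in [q]$ costs at most one more factor of $n$.

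Combining the three estimates gives a total failure probability at most $n^{-2}$ for $C$ large. The crucial point is (ii): the linear scaling $|E(S,\bar{S})| \gtrsim |S|$ furnished by Cheeger's inequality is exactly what lets the Chernoff exponent overcome the combinatorial count $\binom{n}{k}$, which is why only the spectral parameter $\lambda_1$ is needed. This is also where the analysis departs from \cite{CHH}: in the clique setting, $|E(S,\bar S)|$ could be estimated directly, but for a general host graph we must extract this geometric information from the spectrum alone.
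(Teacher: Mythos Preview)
Your proposal is correct and follows essentially the same approach as the paper: Chernoff bounds for each event, Cheeger's inequality to show $\mathbb{E}[e_j(S,\bar S)] \geq \frac{C\log n}{2}|S|$, and a union bound over subsets stratified by size so that the $\binom{n}{k}\leq n^{k}$ count is beaten by the $n^{-\Omega(C)k}$ Chernoff tail. The only cosmetic difference is that the paper handles (iii) as the special case $S=\{v\}$ of (ii) rather than separately, which sidesteps your caveat about $\lambda_1\leq 1$.
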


{\bf Remark:} The choice of $\epsilon = 0.1$ is not too important -- any sufficiently small $\epsilon$ will suffice.  

\begin{proof}
Fix a color $i\in [s]$ and let $\epsilon = 0.1$. 

To prove (i), note that $\mathbb{E}[|E_j\cap C_i|]=p\cdot c_i\leq C\cdot\log n$.  Using Lemma \ref{Chernoff} with $\lambda= \epsilon\cdot\frac{C\log n}{2} $ implies that
\begin{align*}
\mathbb{P}\left( |E_j\cap C_i|\geq (1+\epsilon)\frac{C\log n}{2} \right)&\leq \exp\left( -\frac{\epsilon^2\cdot C\cdot \log n}{2(1+\epsilon/6)} \right)\\
 &\leq \exp\left( -\frac{\epsilon^2\cdot C\cdot\log n}{3} \right)\\
 &\leq n^{-5}\text{ for } C\geq \frac{15}{\epsilon^2}.
\end{align*}

Part $(iii)$ is merely a (useful) special case of $(ii)$, so it suffices to prove $(ii)$.  
To prove $(ii)$, first fix a set $S$ of size $k$ and with volume at most $\frac{1}{2} \vol(G)$.  Then, by Cheeger's inequality, $e(S,\bar{S}) \geq \frac{\lambda_1}{2} \vol(S) \geq \frac{\lambda_1 \delta k}{2}$.  Hence $\e[e_j(S, \bar{S})] \geq \frac{Ck \log(n)}{2}$.  Applying the Chernoff bounds with $\lambda = \epsilon \cdot \e[e_j(S,\bar{S})]$ yields
\begin{align}
\p\Big(e_j(S,\bar{S}) \leq (1-\epsilon) \e[e_j(S,\bar{S})]\Big) &\leq \exp\left(-\frac{\epsilon^2}{2} \e[e_j(S,\bar{S})]\right) \nonumber \\
&\leq \exp\left(-\frac{C\epsilon^2}{4} k \log n\right). \label{eq:cher}
\end{align}
Let $\mathcal{B}$ denote the event that there exists a set $S$ which doesn't satisfy the conclusion of part (ii).  A union bound over $k$ and $S$ of size $k$ yields
\begin{align*}
\p(\mathcal{B}) &\leq \sum_{k=1}^{n} \sum_{\substack{S: |S| = k \\ \vol(S) \leq \vol(G)/2}} \p\Big(e_j(S,\bar{S}) \leq (1-\epsilon) \e[e_j(S,\bar{S})]\Big) \\ 
&\leq  \sum_{k=1}^{n} {n \choose k} \exp\left(-\frac{C\epsilon^2}{4} k \log n\right) \\
&\leq \sum_{k=1}^{n} \exp\left(-\left(\frac{C\epsilon^2}{4} - 1\right) k \log n\right)\\
&\leq n^{-4}.
\end{align*} 
Here the second inequality follows from \eqref{eq:cher} and the fact that there are at most ${n \choose k}$ sets of size $k$ satisfying $\vol(S) \leq \vol(G)/2$, the third from the simple bound that ${n \choose k} \leq n^k$, and the last inequality holds assuming that $C$ is sufficiently large.

A union bound over all $j \in [q]$ and all color classes $i \in [s]$ yields the result.

\end{proof}


Lemma \ref{ColorClassSize} provides lower bounds on the number of edges leaving a set, and upper bounds on the number of edges in a particular color in each of our graphs $G_j$.  In order to apply Proposition \ref{RSTexistence} to then prove that the graphs contain rainbow spanning trees, we thus must study the number of edges between parts.  This requires some care.  

Suppose $\mathcal{P} = \{P_1, P_2, \dots,P_t\}$ is partition of $V(G)$ into $t$ parts.  We use the notation 
\[
e_j(\mathcal{P}) = \frac{1}{2} \sum_{i} e_j(P_i,\bar{P}_i)
\]
to denote the total number of edges between parts in the graph $G_j$.    (We denote the number of edges between parts in $G$ by $e_G(\mathcal{P})$.)

The following Lemma is then immediate,
\begin{lemma}\label{enoughedges}
Suppose $\mathcal{P}$ is a partition of $V(G)$ into $t$ parts, Lemma \ref{ColorClassSize} (i) is satisfied and 
$$e_j(\mathcal{P}) \geq (t-2)(1+\epsilon)\frac{C\log n}{2}+1,$$
where $\epsilon = 0.1$ as in Lemma \ref{ColorClassSize}.  Then there will be at least $t-1$ colors between parts of $\mathcal{P}$ in $G_j$.
\end{lemma}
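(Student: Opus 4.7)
The plan is to argue by contradiction, simply comparing the crude count of cross-partition edges against the assumed lower bound on $e_j(\mathcal{P})$. Suppose toward contradiction that in $G_j$ the edges crossing between parts of $\mathcal{P}$ use only some set of colors $\mathcal{C} \subseteq \{C_1,\ldots,C_s\}$ with $|\mathcal{C}| \leq t-2$. Then every edge contributing to $e_j(\mathcal{P})$ belongs to $E_j \cap C_i$ for some $C_i \in \mathcal{C}$, so
\[
e_j(\mathcal{P}) \leq \sum_{C_i \in \mathcal{C}} |E_j \cap C_i|.
\]

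Next I would invoke Lemma~\ref{ColorClassSize}(i), which is assumed to hold, to bound each term $|E_j \cap C_i|$ by $(1+\epsilon)\frac{C\log n}{2}$. Summing over the at most $t-2$ color classes in $\mathcal{C}$ gives
\[
e_j(\mathcal{P}) \leq (t-2)(1+\epsilon)\frac{C\log n}{2},
\]
which directly contradicts the hypothesis $e_j(\mathcal{P}) \geq (t-2)(1+\epsilon)\frac{C\log n}{2} + 1$. Hence at least $t-1$ distinct colors must appear among the crossing edges.

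There is no real obstacle here; the lemma is essentially a pigeonhole/accounting statement that packages together the color-multiplicity bound from Lemma~\ref{ColorClassSize}(i) with the partition edge count, stated in exactly the form needed to invoke Proposition~\ref{RSTexistence}. The only thing to be careful about is the ``$+1$'' in the hypothesis, which is what forces strict failure of the upper bound and therefore the strict inequality $|\mathcal{C}| \geq t-1$; otherwise the counting argument is immediate.
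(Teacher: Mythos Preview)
Your proof is correct and is precisely the immediate pigeonhole argument the paper intends; the paper does not even write out a proof, stating only that the lemma ``is then immediate'' from Lemma~\ref{ColorClassSize}(i). Your contradiction-and-counting argument is exactly the obvious justification.
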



Using the Cheeger inequality to lower bound the number of edges leaving a set proves insufficient for our goals, at least for small sets.  For a set $S\subseteq V(G)$, let \begin{equation} 
f(S)=\max\left\{ \frac{\lambda_1}{2}\vol(S),\vol(S)-2{|S|\choose 2} \right\}.  \label{eq:f}
\end{equation}
Both quantities serve as a lower bound for $e(S,\bar{S})$ and hence,
\[
\mathbb{E}[e_j(S,\overline{S})]\geq p\cdot f(S).
\]
As a more convenient way of applying the lower bound from \eqref{eq:f}, we let
\begin{equation}
g(z) = \max\left\{ \frac{\lambda_1 \delta z}{2}, \delta z - z(z-1)\right\}. \label{eq:g}
\end{equation}
As both quantities in the right hand side of \eqref{eq:f} are monotone increasing with $\vol(S)$ and $\vol(S) \geq \delta |S|$, we thus have that $f(S) \geq g(|S|)$. 

In order to be able to apply Lemma \ref{enoughedges} to verify the hypothesis of Proposition \ref{RSTexistence}, we need to be somewhat careful when minimizing the number of edges crossing a partition.  We accomplish this as follows:

\begin{lemma}\label{EdgesConcavity}
Let $\mathcal{P}=\{ P_1,...,P_t \}$ be a partition of $[n]$ arranged in weakly increasing order in terms of cardinality.  Let $M$ be the positive root of $ \left( 1-\frac{\lambda_1}{2} \right)\delta s-2{s\choose 2}=\frac{\lambda_1\delta s}{2}.$  (Note that $M=1+\delta-\frac{\lambda_1\delta}{2})$.  Let $t'\leq t$ denote the largest index such that $|P_{t'}|\leq M$, and let $N'=\sum_{i=1}^{t'}|P_i|$.  Then there exists a unique integer $x:=x(|P_1|,\cdots ,|P_t|)$ and $1<x^{\star}\leq M$ satisfying $N'=x+M(t'-x-1)+x^{\star}$ such that
$$ e_G(\mathcal{P})\geq\frac{1}{2}\left( \lambda_1\cdot |E(G)|+\delta x\left(1-\frac{\lambda_1}{2}\right) \right). $$
\end{lemma}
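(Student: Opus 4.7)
The plan is to decompose the lower bound on $e_G(\mathcal{P})$ into two contributions: a ``Cheeger piece'' that aggregates to $\lambda_1 |E(G)|$, and a combinatorial ``bonus'' from small parts that, via concavity, aggregates to at least $\delta x(1-\lambda_1/2)$.

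First, I would derive per-part lower bounds on $e(P_i, \bar{P}_i)$. By Cheeger's inequality, every part with $\vol(P_i) \leq \vol(G)/2$ satisfies $e(P_i, \bar{P}_i) \geq \tfrac{\lambda_1}{2}\vol(P_i)$. For small parts (those with $i \leq t'$, equivalently $|P_i| \leq M$), the combinatorial bound $e(P_i,\bar{P}_i) \geq \vol(P_i) - |P_i|(|P_i|-1)$ is stronger; rewriting it as a Cheeger piece plus an excess,
\begin{equation*}
\vol(P_i)-|P_i|(|P_i|-1) = \tfrac{\lambda_1}{2}\vol(P_i) + \bigl[(1-\lambda_1/2)\vol(P_i) - |P_i|(|P_i|-1)\bigr],
\end{equation*}
and using $\vol(P_i) \geq \delta |P_i|$ together with the identity $M-1 = \delta(1-\lambda_1/2)$, the bracketed excess is at least $|P_i|(M - |P_i|)$. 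Summing these per-part bounds over all $i$, the Cheeger pieces collapse to $\tfrac{\lambda_1}{2}\vol(G) = \lambda_1|E(G)|$, yielding
\begin{equation*}
2 e_G(\mathcal{P}) \geq \lambda_1|E(G)| + \sum_{i \leq t'} |P_i|(M - |P_i|).
\end{equation*}
(A single part $P_j$ with $\vol(P_j) > \vol(G)/2$ requires separate bookkeeping: one applies Cheeger to $\bar{P}_j$ and, if necessary, uses a combinatorial bound on the complement; the same template applies.)

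The heart of the proof is then a concavity/extremization argument for the sum over small parts. The function $f(s) = s(M-s)$ is strictly concave on $[1, M]$ with $f(1) = M - 1$ and $f(M) = 0$. Subject to $\sum_{i \leq t'}|P_i| = N'$ and $|P_i| \in [1, M]$, a standard smoothing argument---replacing two interior values by moving one toward $1$ and the other toward $M$ while preserving their sum strictly decreases a concave sum---shows that the minimum of $\sum_{i \leq t'} f(|P_i|)$ is attained at the configuration with $x$ parts of value $1$, $t'-x-1$ parts of value $M$, and one intermediate $x^\star \in (1, M]$; this is precisely the decomposition $N' = x + M(t'-x-1) + x^\star$. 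At the minimizer, $\sum f \geq x(M-1) + x^\star(M - x^\star) \geq x(M-1) = \delta x(1 - \lambda_1/2)$. Uniqueness of $(x, x^\star)$ is elementary: solving $x^\star = (M-1)x - M(t'-1) + N'$ with $x^\star \in (1, M]$ forces $x$ into a half-open interval of length exactly $1$, containing a unique integer.

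Combining the two steps gives $2 e_G(\mathcal{P}) \geq \lambda_1|E(G)| + \delta x(1 - \lambda_1/2)$, which is the claimed bound. The main obstacle is the concavity step, in particular reconciling the integer nature of the $|P_i|$ with a real-valued extremization in which the endpoint $M$ is typically non-integer; the resolution is that the real-valued minimum---which may use the exact non-integer value $M$---remains a valid lower bound for any integer configuration with the same cardinality and sum constraints. The big-part subcase noted above is a secondary technicality requiring some extra care.
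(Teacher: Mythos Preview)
Your proposal is correct and follows essentially the same approach as the paper: both arguments split $e(P_i,\bar{P}_i)$ into a Cheeger baseline $\tfrac{\lambda_1}{2}\vol(P_i)$ plus a nonnegative combinatorial excess on the small parts, then use convexity/concavity to push the small-part sizes to the extremal configuration $(1,\dots,1,x^\star,M,\dots,M)$, harvesting $(1-\lambda_1/2)\delta$ from each of the $x$ singletons. The only organizational difference is that the paper maximizes the convex quantity $\sum\binom{|P_i|}{2}$ by physically repartitioning the vertices into $\mathcal{P}_1'$, whereas you minimize the concave excess $\sum |P_i|(M-|P_i|)$ directly over real sizes; your framing is a bit cleaner (it avoids having to track volumes through a repartition and makes the real-vs-integer issue with $M$ transparent), but the underlying inequality and the resulting bound are identical.
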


\begin{proof}

Let $\mathcal{P}_1=\{ P_i\in\mathcal{P}: |P_i|\leq M \}$.  Let $\mathcal{P}_1'$ be a partition of $\cup_{i\in P_1}P_i$ into $|\mathcal{P}_1|$ parts, each of size $1$ or $M$, with possibly one set with size $x^{\star}$, where $1\leq x^{\star}\leq M$.  Then $\sum_{i\in P_1}{|P_i|\choose 2}\leq\sum_{i\in P_1'}{|P_i'|\choose 2}.$  Let $x=|\{ P_i'\in P_1': |P_i'|=1 \}|$.  Note that $x$ is the number of parts of size one, so $x\cdot 1 +x^{\star}+(|P_1|-x)\cdot M = |\cup_{i\in P_1}P_i|$.  (Also, notice that $x\cdot 1+x^{\star}+(t-x)M\leq n$.)

Then
\begin{align*}
\sum_{P_i\in P_1}e(P_i,\overline{P}_i)&\geq \sum_{P_i\in P_1}\left[ \vol(P_i)-2{|P_i|\choose 2} \right]\\
 &\geq \sum_{P_i'\in P_1'}\left[ \vol(P_i')-2{|P_i'|\choose 2} \right] .
\end{align*}

Note that $\vol(P_i)\geq\delta |P_i|$ for each $i\in [t]$, so if $\delta |P_i|-2{|P_i|\choose 2}\geq \frac{\lambda_1}{2}\cdot\delta |P_i|$, then $\vol(P_i)-2{|P_i|\choose 2}\geq \frac{\lambda_1}{2}\cdot\vol(P_i)$, since $\frac{\lambda_1}{2}<1$.  Thus,

\begin{align*}
\sum_{P_i\in P_1}e(P_i,\overline{P}_i)&\geq  \sum_{|P_i'|=1}\left( \vol(P_i')-2{1\choose 2} \right)+\sum_{|P_i'|>1}\frac{\lambda_1}{2}\cdot \vol(P_i')\\
 &\geq \sum_{|P_i'|=1}\left( \frac{\lambda_1}{2}\cdot\vol(P_i')+\left( 1-\frac{\lambda_1}{2} \right)\vol(P_i') \right)+\sum_{|P_i'|>1}\frac{\lambda_1}{2}\cdot \vol(P_i')\\
 &\geq \sum_{i\in \mathcal{P}_1} \frac{\lambda_1}{2}\cdot\vol(P_i')+x\left( 1-\frac{\lambda_1}{2} \right)\delta \\
 &= \sum_{i\in P_1} \frac{\lambda_1}{2}\cdot\vol(P_i)+x\left( 1-\frac{\lambda_1}{2} \right)\delta .
\end{align*}

This implies that

\begin{align*}
2e(\mathcal{P})&\geq \sum_{i\in P_1} \frac{\lambda_1}{2}\cdot\vol(P_i)+x\left( 1-\frac{\lambda_1}{2} \right)\delta +\sum_{P_i\notin P_1}\frac{\lambda_1}{2}\cdot\vol(P_i)\\
 &=\sum_{i\in [t]}\frac{\lambda_1}{2}\cdot\vol(P_i)+x\left( 1-\frac{\lambda_1}{2}\right)\cdot\delta\\
  &=\lambda_1\cdot |E(G)| +x\left(1-\frac{\lambda_1}{2}\right)\cdot\delta.
\end{align*}

Thus, 

$$ e(\mathcal{P})\geq\frac{1}{2}\left( \lambda_1\cdot |E(G)|+\delta x\left(1-\frac{\lambda_1}{2}\right) \right). $$

\end{proof}

\begin{cor}\label{concavity}
Suppose $z_1, \dots, z_{t}$ form a weakly increasing sequence of positive integers.  Let $g(z)$ be as in \eqref{eq:g}, and let $M$ be the positive root of $\left( 1-\frac{\lambda_1}{2} \right)\delta s-2{s\choose 2}=\frac{\lambda_1\delta s}{2}.$.  (Note that $M=1+\delta-\frac{\lambda_1\delta}{2}$.)  Let $t' \leq t$ denote the largest index so that $z_{t'} \leq M$, and let $N'=\sum_{i=1}^{t'} z_i$.  Then there exists a unique integer $x:=x(z_1, \dots, z_{t'})$ and $1 < z^\star \leq M$ satisfying $N' = x + M(t'-x-1) + z^\star.$  Furthermore, letting $G(z_1,...,z_t)=\sum_{i=1}^t g(z_i)$,
\begin{align*}
G(z_1,...,z_t)&\geq G(\underbrace{1,...,1}_{x\text{ times}},z^{\star},\underbrace{M,...,M}_{t'-x-1\text{ times}},z_{t'+1}, z_{t'+2}, \dots, z_t)\\
 &= \delta x+g(z^{\star})+\frac{\lambda_1\delta (N-x-z^{\star})}{2}.
\end{align*}
\end{cor}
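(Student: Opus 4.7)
The plan is to establish this as a consequence of the concavity of $g$ on $[1, M]$ combined with its linearity on $[M, \infty)$. On $[1, M]$, the first branch dominates, so $g(z) = \delta z - z(z-1) = (\delta+1)z - z^2$, which is strictly concave; on $[M, \infty)$, $g(z) = \lambda_1\delta z/2$ is linear. In particular, $g$ is continuous and concave on all of $[1, \infty)$, and the two branches agree at $z = M$.

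Since $z_{t'+1}, \ldots, z_t > M$, their $g$-contribution is $\frac{\lambda_1\delta}{2}\sum_{i > t'} z_i$ and is unaffected by any rearrangement among $z_1, \ldots, z_{t'}$. It therefore suffices to lower bound $\sum_{i \leq t'} g(z_i)$ over integer tuples $(z_1, \ldots, z_{t'}) \in [1, M]^{t'}$ with $\sum z_i = N'$. I would relax to real variables $w \in [1, M]^{t'}$ with $\sum w_i = N'$: this is a compact convex polytope on which $w \mapsto \sum g(w_i)$ is concave, so its minimum is attained at a vertex. The vertices of this polytope have at most one coordinate strictly in the open interval $(1, M)$; consequently the continuous minimum is realized by a configuration with $x$ coordinates equal to $1$, one coordinate equal to some $z^\star \in (1, M]$, and $t' - x - 1$ coordinates equal to $M$. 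The constraint $x + z^\star + (t'-x-1)M = N'$ pins down $(x, z^\star)$, and requiring $1 < z^\star \leq M$ confines $x$ to a half-open interval of length one, which contains exactly one integer (degenerate boundary cases such as $N' = t'$ are handled by convention, taking $z^\star \to 1^+$).

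To finish, evaluate $g(1) = \delta$ and $g(M) = \lambda_1\delta M/2$, yielding $\sum_{i \leq t'} g(z_i) \geq x\delta + g(z^\star) + (t'-x-1)\frac{\lambda_1\delta M}{2}$. Adding back the fixed contribution from $i > t'$ and using $(t'-x-1)M = N' - x - z^\star$ together with $N := \sum_{i=1}^t z_i = N' + \sum_{i>t'} z_i$ rewrites the bracketed quantity as $N - x - z^\star$, giving the advertised formula $\delta x + g(z^\star) + \frac{\lambda_1\delta(N-x-z^\star)}{2}$. Mathematically the argument is light; the main burden is bookkeeping around the uniqueness of $(x, z^\star)$ and cleanly invoking the concave-minimization-on-polytope fact, essentially repackaging the rearrangement strategy already deployed in the proof of Lemma \ref{EdgesConcavity}.
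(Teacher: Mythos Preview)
Your argument is correct and matches the paper's approach: the corollary is stated in the paper without its own proof, being immediate from the convexity/rearrangement argument used to prove Lemma~\ref{EdgesConcavity}, and your concave-minimization-on-a-polytope framing is a clean restatement of exactly that idea. The only cosmetic point is the boundary case $z^\star=1$ (excluded by the strict inequality $1<z^\star$), which the paper handles the same way you do, by convention.
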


\section{Proof of Theorem \ref{t1}} \label{sec4}

Our strategy now is, in principle, simple: We use Corollary \ref{concavity} along with Lemma \ref{ColorClassSize} $(iii)$ to prove that there are sufficiently many edges leaving any partition that Lemma \ref{enoughedges} will allow us to apply Proposition \ref{RSTexistence} in each of our graphs $G_j$.  Unfortunately, while this straightforward approach works (with some effort) for partitions into not too many parts, it breaks down as the number of parts gets very close to $n$.  We handle these at the end in a slightly different way.       

\subsection{Partitions where $2\leq t\leq \frac{(1-\epsilon)}{(1+4\epsilon)}\cdot n$.}

\begin{lemma}\label{ExpectedEdges} 
For all partitions $\mathcal{P}=\{ P_1,...,P_t \}$ where $2\leq t\leq \frac{(1-\epsilon)}{(1+4\epsilon)}\cdot n$, we have that
$$(1-\epsilon)\mathbb{E}[e_{G_j}(\mathcal{P})]>\frac{(1+\epsilon)(t-2)C\log n}{2}.$$

\end{lemma}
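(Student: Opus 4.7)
The plan is to translate the probabilistic statement into a deterministic lower bound on $e_G(\mathcal{P})$ and then apply the structural estimates from Section~\ref{sec3}. Since $\mathbb{E}[e_{G_j}(\mathcal{P})] = p\,e_G(\mathcal{P})$ with $p = \frac{C\log n}{\delta\lambda_1}$, the target inequality $(1-\epsilon)\,\mathbb{E}[e_{G_j}(\mathcal{P})] > \frac{(1+\epsilon)(t-2)C\log n}{2}$, after dividing through by $C\log n$, is equivalent to
\[
e_G(\mathcal{P}) \;>\; \frac{(1+\epsilon)(t-2)\,\delta\,\lambda_1}{2(1-\epsilon)}.
\]
So it suffices to prove this deterministic lower bound on the cross-edges of $\mathcal{P}$ in $G$.

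To that end I would invoke Lemma~\ref{EdgesConcavity} (equivalently, Corollary~\ref{concavity}) to get
\[
e_G(\mathcal{P}) \;\geq\; \tfrac{1}{2}\Big(\lambda_1\,|E(G)| + \delta\,x\,(1-\lambda_1/2)\Big),
\]
where $x = x(|P_1|,\dots,|P_t|)$ is the integer parameter produced by the singleton/size-$M$ rearrangement of the small parts. Combining with $|E(G)| \geq \delta n/2$ from the minimum-degree hypothesis reduces the task to verifying the cleaner inequality
\[
(1-\epsilon)\big[\lambda_1 n + x(2-\lambda_1)\big] \;>\; 2(1+\epsilon)(t-2)\,\lambda_1.
\]

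The remaining ingredient is a lower bound on $x$ in terms of $t$ and $n$. Using the explicit formula $x = \frac{M(t'-1)+z^{\star}-N'}{M-1}$ from Corollary~\ref{concavity} with $M = 1+\delta(1-\lambda_1/2)$, the worst case occurs when every part has size at most $M$ (so $t'=t$, $N'=n$), which gives $x \geq t - 1 - \frac{n-1}{M-1} = t - O(n/\delta)$; any ``large'' part of size exceeding $M$ only increases $x$, since such a part absorbs mass from the small-part pool and forces more of the remaining parts to become singletons in the rearrangement. Because $\delta \geq C\log n$ with $C$ sufficiently large, this correction is negligible compared to $t$, so substituting into the previous display reduces everything to a routine algebraic check that goes through in the range $2 \leq t \leq (1-\epsilon)n/(1+4\epsilon)$. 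The slack factor $(1+4\epsilon)$ (rather than the tighter $(1+3\epsilon)$ one would obtain pretending $x$ equals $t$ exactly) is tuned to absorb this $O(n/\delta)$ correction together with the lower-order terms coming from $(t-2)$.

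The main obstacle is precisely this final algebraic bookkeeping: the inequality is essentially tight at the upper end of the range of $t$, so one has to carefully track how the $O(n/\delta)$ correction in $x$, the ``$-2$'' inside $(t-2)$, and the $(1\pm\epsilon)$ prefactors all interact. Pinning down the correct cutoff $(1-\epsilon)/(1+4\epsilon)$ and verifying that it is comfortably achieved for all allowed partitions requires taking $C$ large enough that $n/\delta \ll \epsilon\,n$.
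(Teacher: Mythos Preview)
Your reduction to a deterministic bound on $e_G(\mathcal{P})$ and your use of Lemma~\ref{EdgesConcavity} together with a lower bound on $x$ is exactly the paper's strategy for the range $\frac{(1-\epsilon)}{(1+\epsilon)}\cdot\frac{n}{2}\le t\le\frac{(1-\epsilon)}{(1+4\epsilon)}\cdot n$ (its Case~3), and your bound $x\ge t-1-\frac{n-t}{M-1}$ is precisely Proposition~\ref{OnesProp}. For small $t$ (the paper's Case~2) your approach would also go through, although the paper bypasses the $x$ machinery entirely there and just uses Cheeger on every part to get $2e_G(\mathcal{P})\ge\frac{\lambda_1\delta n}{2}$ directly.

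The genuine gap is that you apply Lemma~\ref{EdgesConcavity} to every partition without checking the volume hypothesis hidden in its proof. The final step of that proof bounds $e(P_i,\overline{P_i})\ge\frac{\lambda_1}{2}\vol(P_i)$ for each part with $|P_i|>M$, and this uses Cheeger's inequality, which only yields $\frac{\lambda_1}{2}\vol(P_i)$ when $\vol(P_i)\le\frac12\vol(G)$. If the largest part $P_t$ has $\vol(P_t)>\frac12\vol(G)$, the conclusion $e_G(\mathcal{P})\ge\frac12\big(\lambda_1|E(G)|+\delta x(1-\lambda_1/2)\big)$ that you quote is not established; indeed one can cook up irregular examples where the $\lambda_1|E(G)|$ term overshoots the truth. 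The paper treats this situation as a separate Case~1: it applies Corollary~\ref{concavity} only to $P_1,\dots,P_{t-1}$ (taking $N=n-|P_t|$), bounds $e(P_t,\overline{P_t})\ge\frac{\lambda_1}{2}\vol(\overline{P_t})$ via Cheeger on the complement, and then exploits $N\ge x+2(t-1-x)$ to extract the needed lower bound $\frac12 x+N\ge t-1+\tfrac{N}{2}$. Without this case split your argument does not cover all partitions in the stated range.
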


\begin{proof}

Fix a partition $\mathcal{P} = \{P_1, \dots, P_t\}$.  Without loss of generality, assume that \\
$\vol(P_1) \leq \vol(P_2) \leq \dots \leq \vol(P_t)$.  Let $x=x(|P_1|,...,|P_t|)$ be as in Corollary \ref{concavity}.

\noindent\textbf{Case 1:} $\vol(P_t)\geq\frac{1}{2}\vol(G)$.


\begin{align*}
\frac{2}{p}\mathbb{E}\left[e_{G_j}(\mathcal{P})\right]&= \sum_{i\in [t-1]}e(P_i,\overline{P}_i)+e(P_t,\overline{P}_t) \\
& \geq \left( \sum_{i} f(P_i) \right)+ e(P_t,\overline{P}_t) \hspace{1.9in} \mbox{ where $f$ is from \eqref{eq:f}}\\
& \geq \left( \sum_{i} g(|P_i|) \right) +  e(P_t,\overline{P}_t) \hspace{1.9in} \mbox{ where $g$ is from \eqref{eq:g}}\\
 &\geq  \left[ \delta x+g(z^{\star}) +\frac{\lambda_1\delta(N-x-z^{\star})}{2}+e(P_t,\overline{P}_t) \right]\\ &\hspace{2.7in}\text{ by Corollary \ref{concavity}, where } N=n-|P_t|.
\end{align*}
Thus,
\begin{align*}
\frac{2}{p}\mathbb{E}\left[e_{G_j}(\mathcal{P})\right]  &\geq  \left[ \delta x+g(z^{\star}) +\frac{\lambda_1\delta(N-x-z^{\star})}{2}+\frac{\lambda_1}{2}\cdot\vol(\overline{P}_t) \right]\text{ by Cheeger's inequality }\\
   & \hspace{2.7in}\text{ and the fact that $\vol(P_t)\geq\frac{\vol(G)}{2}$}\\
   &\geq \left[ \delta x+g(z^{\star}) +\frac{\lambda_1\delta(N-x-z^{\star})}{2}+\frac{\lambda_1\delta}{2}\sum_{i\in [t-1]}|P_i| \right]\\
    &=\left[ \delta x+g(z^{\star}) +\frac{\lambda_1\delta(N-x-z^{\star})}{2}+\frac{\lambda_1\delta}{2}\cdot N \right]\\
     &=\left[ \delta x+g(z^{\star})-\frac{\lambda_1\delta z^{\star}}{2}+\frac{\lambda_1\delta (2N-x)}{2} \right]\\
      &\geq \left[ \delta x+\frac{\lambda_1\delta (2N-x)}{2} \right]\text{ since } g(z^{\star})\geq \frac{\lambda_1\delta z^{\star}}{2}.\\
      \end{align*}
 
Therefore,

 \begin{align*}
 \frac{2}{p}\mathbb{E}\left[e_{G_j}(\mathcal{P})\right]     &\geq \left[ \left( \delta -\frac{\lambda_1\delta}{2} \right)x+\lambda_1\delta\cdot N \right]\\
      &\geq  \lambda_1 \delta \left[ \frac{1}{2}x+ N \right].
\end{align*}

By the simple inequality $N\geq x+2(t-1-x)$, one obtains $x\geq 2(t-1)-N$.  Thus,
\begin{align*}
\frac{1}{2}x+N & \geq  \frac{1}{2}(2(t-1)-N)+N\\
 &= t-1+\frac{N}{2}.
\end{align*}

Let $N=\alpha t$.  Then $t-1+\frac{N}{2}=\left( 1+\frac{\alpha}{2} \right)t-1$.  Since $\alpha\geq 1/2$, this implies that

$$ \mathbb{E}[e(\mathcal{P})]\geq\frac{C\log n}{2}\left( \frac{5}{4}t-1 \right). $$

Therefore,
$$ (1-\epsilon)\mathbb{E}[e(\mathcal{P})]\geq (1-\epsilon)\cdot \frac{C\log n}{2}\left( \frac{5}{4}t-1 \right), $$
and for $0<\epsilon<\frac{1}{9}<\frac{t+4}{9t-4}$ we have that
$$ (1-\epsilon)\cdot \frac{C\log n}{2}\left( \frac{5}{4}t-1 \right)>\frac{(1+\epsilon)(t-2)C\log n}{2}. $$

\noindent\textbf{Case 2:} $\vol(P_t)<\frac{1}{2}\vol(G),$ and $2\leq t < \frac{(1-\epsilon)}{(1+\epsilon)}\cdot\frac{n}{2}$.

Note that this is slightly simpler than Case 1, as we may apply Cheeger's inequality directly to each part of the partition.  Observe,

\begin{align*}
2\mathbb{E}[e(\mathcal{P})]&\geq p\sum_{i\in I_1}e(P_i,\overline{P_i})\\
 &\geq  p\sum_{i\in [t]}\frac{\lambda_1\cdot\delta}{2}\cdot |P_i|\\
  &= p\cdot\frac{\lambda_1\cdot\delta}{2}\cdot n\\
   &=\frac{C\log n\cdot n}{2}.
\end{align*}

Thus, $$\mathbb{E}[e(\mathcal{P})]\geq\frac{ C\log n\cdot n}{4}.$$

Since $t\leq\frac{(1-\epsilon)}{(1+\epsilon)}\cdot\frac{n}{2}$, we have that
$$ \frac{(1-\epsilon)n}{4}>\frac{(1+\epsilon)(t-2)}{2}. $$

This implies that 
\begin{align*}
(1-\epsilon )\mathbb{E}[e(\mathcal{P})]&\geq \frac{(1-\epsilon)n\cdot C\log n}{4}\\
 &>  \frac{(1+\epsilon)(t-2)C\log n}{2}.
\end{align*}

Thus, we have that Lemma \ref{ExpectedEdges} holds in both the case where $\vol(P_t)\geq\frac{1}{2}\vol(G)$ and the case where $\vol(P_t)<\frac{1}{2}\vol(G)$ and $2\leq t<\frac{(1-\epsilon)}{(1+\epsilon)}\cdot\frac{n}{2}$.


\item \textbf{Case 3:}  For each $i\in [t]$, $\vol(P_i)<\frac{1}{2}\vol(G)$ and $\displaystyle{\frac{(1-\epsilon)}{(1+\epsilon)}\cdot\frac{n}{2}\leq t\leq\frac{(1-\epsilon)}{(1+4\epsilon)}\cdot n}$.

The following proposition follows from Corollary \ref{concavity} and will be of use to us in this case.

\begin{prop}\label{OnesProp}
Let $M$ be the value for which $\delta M-M(M-1)=\frac{\lambda\delta M}{2}$, and let $x=x(|P_1|,...,|P_t|)$ be as in Corollary \ref{concavity}.  If $t= n-y$, then $x\geq t-\lfloor\frac{y}{M-1}\rfloor -1$.
\end{prop}

Observe,
\begin{align*}
\frac{2}{p}\cdot\mathbb{E}[e(\mathcal{P})] &\geq   \delta x+g(z^{\star})+\frac{\lambda_1\delta}{2}(n-x-z^{\star}) 
\text{ by Corollary \ref{concavity}}\\
 &\geq  \left( \delta-\frac{\lambda_1\delta}{2} \right)x+\frac{\lambda_1\delta}{2}n \\
  &\geq  \frac{\delta}{2}\left( t-\left\lfloor\frac{y}{M-1}\right\rfloor -1 \right)+\frac{\lambda_1\delta}{2}n \text{ by Proposition \ref{OnesProp}}\\
   &\geq   \frac{\delta}{2}\left( t-\left(\frac{n-t}{M-1}\right) -1 \right)+\frac{\lambda_1\delta}{2}n \\
    &=\frac{\delta}{2}\left( \left( t-\left(\frac{n-t}{M-1}\right) -1 \right)+\lambda_1\cdot n \right).
\end{align*}

Let $t=\alpha\cdot n$.  Then we have that
\begin{align*}
\frac{2}{p}\cdot\mathbb{E}[e(\mathcal{P})] &\geq \frac{\delta\cdot n}{2}\left( \left( \alpha-\left(\frac{1-\alpha}{M-1}\right) -\frac{1}{n}\right)+\lambda_1 \right)\\
 &=\frac{\lambda_1\delta\cdot n}{2}\left( \frac{1}{\lambda_1}\left( \alpha-\left(\frac{1-\alpha}{M-1}\right) -\frac{1}{n}\right)+1 \right)\\
   &\geq \frac{\lambda_1\delta\cdot n}{2}\left( \alpha-\left(\frac{1-\alpha}{M-1}\right)-\frac{1}{n}+1 \right).
\end{align*}

Thus, $\displaystyle{\mathbb{E}[e(\mathcal{P})]\geq\frac{n C\log n}{4}\left( \alpha-\left(\frac{1-\alpha}{M-1}\right)-\frac{1}{n}+1 \right).}$

We want to show that $\displaystyle{(1-\epsilon )\mathbb{E}[e(\mathcal{P})]> \frac{(1+\epsilon )(t-2)C\log n}{2}}$, so it suffices to show that

$$ \frac{(1-\epsilon)n}{2}\left( \left( 1+\frac{1}{M-1} \right)\alpha -\frac{1}{M-1}-\frac{1}{n}+1 \right)>(1+\epsilon)(\alpha n-2). $$

Thus, it suffices to show 

$$ \frac{(1-\epsilon)}{2}\left( \alpha -\frac{1}{M-1}-\frac{1}{n}+1 \right)>(1+\epsilon)\alpha. $$

This is implied by

$$ \alpha <\frac{(1-\epsilon)}{(1+3\epsilon)}\left( 1-\frac{1}{M-1}-\frac{1}{n} \right). $$

Note that $$\displaystyle{\frac{(1-\epsilon)}{(1+3\epsilon)}\left( 1-\frac{1}{M-1}-\frac{1}{n} \right)=\frac{(1-\epsilon)}{(1+3\epsilon)}\left( 1-\frac{1}{\delta -\lambda_1\delta/2}-\frac{1}{n} \right)},$$
 and 
$$\displaystyle{\frac{(1-\epsilon)}{(1+3\epsilon)}\left( 1-\frac{1}{\delta -\lambda_1\delta/2}-\frac{1}{n} \right)  >\frac{(1-\epsilon)}{(1+4\epsilon)}},$$ for $$\displaystyle{n>\left( 1-\frac{(1+3\epsilon)}{(1+4\epsilon)}-\frac{2}{\delta (2-\lambda_1)} \right)^{-1}}.$$

This is an ugly expression, but note that for $\epsilon=0.1$, we have that
$$ \left( 1-\frac{(1+3\epsilon)}{(1+4\epsilon)}-\frac{2}{\delta (2-\lambda_1)} \right)^{-1}<\left( 1-\frac{1.3}{1.4}-\frac{2}{C\log n} \right)^{-1}. $$

The inequality 
$$n>\left( 1-\frac{1.3}{1.4}-\frac{2}{C\log n} \right)^{-1}$$
is satisfied for rather mild $n$ depending on $C$.  For example, if $C=100$, then $n=15$ suffices.

Therefore,
$$\displaystyle{(1-\epsilon )\mathbb{E}[e(\mathcal{P})]> \frac{(1+\epsilon )(t-2)C\log n}{2}}$$ for $n$ sufficiently large.

\end{proof}


\subsection{Partitions where $\frac{(1-\epsilon)}{(1+4\epsilon)}\cdot n\leq t< n-2$.}

\begin{lemma}  Assume that for each $i\in [t]$, $\vol(P_i)<\frac{1}{2}\vol(G)$ and $\frac{(1-\epsilon)}{(1+4\epsilon)}\cdot n<t< n-2$.  Then there are at least $t-1$ colors between parts with probability at least $1-n^{-2}$.
\end{lemma}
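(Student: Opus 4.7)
The plan is as follows. Condition on the event from Lemma \ref{ColorClassSize} (which itself holds with probability at least $1 - n^{-2}$); we then argue deterministically that every partition $\mathcal{P}$ as in the statement has at least $t-1$ colors between parts. This case is the most delicate because, for $t$ near $n$, the target $(t-2)(1+\epsilon)C\log n/2$ needed to invoke Lemma \ref{enoughedges} can approach or even exceed $|E(G_j)|$ itself, so a naive route through the earlier strategy of Lemma \ref{ExpectedEdges} is no longer viable. The key new ingredient is the hypothesis that every part $P_i$ has $\vol(P_i) < \vol(G)/2$, which permits us to apply Lemma \ref{ColorClassSize}(ii) to each individual part.

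First I would invoke Proposition \ref{OnesProp}: setting $y = n - t$ and letting $S$ denote the set of singleton vertices with $x = |S|$, we obtain $x \geq t - \lfloor y/(M-1)\rfloor - 1$. Since $M - 1 = \delta(1-\lambda_1/2) \geq \delta/2 \geq C\log n/2$ in our regime, the correction $y/(M-1)$ is much smaller than $t$, so $x$ is very close to $t$. Writing $B = V(G)\setminus S$, we have $|B| = n - x \leq 2y$, which is small.

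Second, I would apply Lemma \ref{ColorClassSize}(ii) individually to each part $P_i$ (valid by the volume hypothesis) to conclude
\[
e_j(\mathcal{P}) \;\geq\; (1-\epsilon)\,p\,e_G(\mathcal{P}).
\]
To lower bound $e_G(\mathcal{P})$, I would use that for singletons $e_G(\{v\}, V\setminus\{v\}) = d_v \geq \delta$ (much sharper than Cheeger), and for non-singleton parts I would take the max of Cheeger's bound and the trivial bound $\vol(P_i) - 2\binom{|P_i|}{2}$ as in Corollary \ref{concavity}. Combining via Corollary \ref{concavity} with $x \approx t$ from the first step yields an estimate of the form $e_j(\mathcal{P}) \gtrsim (1-\epsilon)C\log n\,[\,2x/\lambda_1 + (n-x)\,]/4$, which, after substitution and algebraic manipulation, exceeds the target $(1+\epsilon)(t-2)C\log n/2$; Lemma \ref{enoughedges} then finishes the argument.

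The main obstacle will be closing the estimate in the tight subcase where $\lambda_1$ is close to $1$ and $t$ is close to $n - 3$. In that regime, the hypothesis $t < n - 2$ (equivalently $y \geq 3$) must be used, and one should exploit that the non-singleton parts are small enough that the trivial bound $\vol(P_i) - 2\binom{|P_i|}{2}$ strictly beats Cheeger's inequality. A careful case analysis based on the number $b$ and sizes of the non-singleton parts, leveraging that $|B| \leq 2y$ so that there are few internal edges reducing $e_G(\mathcal{P})$ below $|E(G)|$, should close the argument.
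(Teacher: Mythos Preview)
Your approach has a genuine gap: it cannot close in the regime you yourself flag as tight. You propose to condition on Lemma \ref{ColorClassSize} and then reach the threshold in Lemma \ref{enoughedges}, namely $e_j(\mathcal{P}) > (1+\epsilon)(t-2)\tfrac{C\log n}{2}$. But take $\lambda_1$ close to $1$ and $t = n-3$. Even granting the sharpest possible bound $e_G(\mathcal{P}) \geq |E(G)| - 6$ (at most six internal edges), one gets
\[
e_j(\mathcal{P}) \;\geq\; (1-\epsilon)\,p\,(|E(G)|-6)\;\approx\;(1-\epsilon)\,\frac{nC\log n}{2\lambda_1}\;\approx\;(1-\epsilon)\,\frac{nC\log n}{2},
\]
whereas the target is $(1+\epsilon)(n-5)\tfrac{C\log n}{2}$. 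The inequality $(1-\epsilon)n > (1+\epsilon)(n-5)$ forces $n < 5(1+\epsilon)/(2\epsilon) \approx 27$, so for large $n$ the deterministic route through Lemma \ref{enoughedges} is impossible in principle, not merely delicate. No sharpening of the lower bound on $e_G(\mathcal{P})$ can help, because the obstruction is the \emph{upper} bound $(1+\epsilon)\tfrac{C\log n}{2}$ on color classes in $G_j$: even if every edge of $G_j$ were between parts, that bound would only guarantee roughly $n/((1+\epsilon)\lambda_1)$ colors, which is below $t-1$ when $\lambda_1$ is near $1$ and $t$ is near $n$.

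The paper does something structurally different. Rather than conditioning on Lemma \ref{ColorClassSize} and counting edges, it proves the lemma as an immediate corollary of a direct probabilistic estimate (Lemma \ref{ProbLemma}): for each partition $\mathcal{P}$ and each collection $\mathfrak{C}$ of $s-t+1$ color classes, it bounds the probability that \emph{none} of the colors in $\mathfrak{C}$ appear between parts of $\mathcal{P}$ in $G_j$, and then takes a union bound over all $\mathcal{P}$ and $\mathfrak{C}$. The computation uses the actual numbers $\mathcal{C}_i^{\mathcal{P}}$ of edges of color $i$ crossing $\mathcal{P}$, reduces to an extremal configuration via convexity and monotonicity (some colors with $0$ crossing edges, the rest with $\lambda_1\delta/2$), and feeds in the lower bound on $e_G(\mathcal{P})$ from Lemma \ref{EdgesConcavity} together with Proposition \ref{OnesProp} to control $s-k$. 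The resulting exponent beats the $\exp(2y\log n)$ partition count precisely when $y = n-t \geq 3$, which is where the hypothesis $t < n-2$ enters. The key conceptual difference is that this argument exploits the distribution of edge counts among color classes, not just the uniform upper bound from Lemma \ref{ColorClassSize}(i); that is exactly the information Lemma \ref{enoughedges} throws away.
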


Notice that this is a corollary of the following lemma.

\begin{lemma}\label{ProbLemma}
Let $\Pi$ be the set of partitions of $[n]$ into $t$ parts, where $\frac{(1-\epsilon)}{(1+4\epsilon)}\cdot n<t<n-2$.  Let $C_1,...,C_s$ be the color classes of $G$, and let $c_i=|C_i|$.  Let $\mathcal{C}$ be the set of collections of $s-t+1$ color classes.  For a partition, $\mathcal{P}\in\Pi$ and $\mathfrak{C}\in\mathcal{C}$, let $\mathcal{B}_{\mathcal{P},\mathfrak{C}}$ be the event that none of the $s-t+1$ color classes in $\mathfrak{C}$ show up outside of the parts of $\mathcal{P}$.  Then

$$ \mathbb{P}\left[ \bigcup_{\mathcal{P}\in\Pi} \bigcup_{\mathfrak{C}\in\mathcal{C}} \mathcal{B}_{\mathcal{P},\mathfrak{C}} \right]\leq n^{-2}$$

for $n$ and $C$ sufficiently large.

\end{lemma}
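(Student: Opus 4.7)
The plan is to prove Lemma~\ref{ProbLemma} by a two-level union bound, first over partitions $\mathcal{P}\in\Pi$ and then, for each fixed $\mathcal{P}$, over the color collections $\mathfrak{C}$ of size $s-t+1$.

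First, I would compute the per-pair probability. For each color $C_i$, let $X_i(\mathcal{P})$ denote the number of color-$i$ edges of $G$ that cross $\mathcal{P}$. Since each edge is independently assigned to one of $G_1,\dots,G_q$ with probability $1/q$, and since edges of distinct color classes are disjoint, the events ``color $C_i$ does not appear between parts of $\mathcal{P}$ in $G_j$'' are mutually independent across $i$. Thus
\[
\mathbb{P}[\mathcal{B}_{\mathcal{P},\mathfrak{C}}]=\prod_{C_i\in\mathfrak{C}}(1-1/q)^{X_i(\mathcal{P})},
\]
and the elementary bound $e_k(a_1,\dots,a_s)\leq(a_1+\cdots+a_s)^k/k!$ on the $k$-th elementary symmetric polynomial (obtained by expanding $(\sum_i a_i)^k$ and discarding non-distinct index tuples) yields
\[
\sum_{\mathfrak{C}}\mathbb{P}[\mathcal{B}_{\mathcal{P},\mathfrak{C}}]\leq\frac{\mu(\mathcal{P})^{s-t+1}}{(s-t+1)!},\qquad\mu(\mathcal{P}):=\sum_{i=1}^s(1-1/q)^{X_i(\mathcal{P})}.
\]

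The key structural step is to control $\mu(\mathcal{P})$ using the fact that $t$ is close to $n$. Writing $t=n-y$ with $y<y_{\max}:=\tfrac{5\epsilon}{1+4\epsilon}n$, the set $S_\mathcal{P}$ of vertices lying in non-singleton parts satisfies $|S_\mathcal{P}|\leq 2y$. Every within-part edge of $G$ lies in $G[S_\mathcal{P}]$, so $c_i-X_i(\mathcal{P})\leq|C_i\cap E(G[S_\mathcal{P}])|$ for each $i$; in particular the number of colors with $X_i(\mathcal{P})=0$ is at most $|E(G[S_\mathcal{P}])|$. A degree count over the singleton parts of $\mathcal{P}$ gives $\sum_iX_i(\mathcal{P})=e_G(\mathcal{P})\geq(n-|S_\mathcal{P}|)\delta/2\geq(n-2y_{\max})\delta/2$, and combined with the color-class size bound $c_i\leq\delta\lambda_1/2$ this forces many colors to have $X_i(\mathcal{P})$ of order $q\log n$, for which $(1-1/q)^{X_i}=O(n^{-1})$. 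Together these observations bound $\mu(\mathcal{P})$ by $|E(G[S_\mathcal{P}])|$ plus a lower-order error.

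Finally, I would union-bound over $\mathcal{P}\in\Pi$ after reparametrizing: each $\mathcal{P}$ is determined by $S_\mathcal{P}$ together with a partition of $S_\mathcal{P}$ into parts of size at least $2$, so the number of $\mathcal{P}$ with $|S_\mathcal{P}|=s'$ is at most $\binom{n}{s'}(s')^{s'}$, summed over $s'\leq 2y_{\max}$. The hard part will be balancing the combinatorial count of partitions---which can be as large as $n^{\Theta(y_{\max})}=n^{\Theta(n)}$---against the per-partition bound $\mu(\mathcal{P})^{s-t+1}/(s-t+1)!$. The required decay comes primarily from the factorial $(s-t+1)!$, which via Stirling yields $n^{-\Omega(n)}$ when $\mu(\mathcal{P})$ is subdominant relative to $s-t+1$; the structural bound on $\mu(\mathcal{P})$ (together with the hypothesis that $\delta\geq C\log n$ for $C$ large) ensures this is the case, delivering the target $n^{-2}$.
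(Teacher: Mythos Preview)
Your union-bound framework is fine, but the central analytic step fails. The elementary-symmetric-polynomial inequality
\[
\sum_{\mathfrak{C}}\mathbb{P}[\mathcal{B}_{\mathcal{P},\mathfrak{C}}]\;\le\;\frac{\mu(\mathcal{P})^{s-t+1}}{(s-t+1)!},
\qquad
\mu(\mathcal{P})=\sum_{i=1}^{s}(1-1/q)^{X_i(\mathcal{P})},
\]
is far too weak here, because nothing in the hypotheses prevents the color classes from being \emph{small}. Your structural claim that ``the color-class size bound $c_i\le \delta\lambda_1/2$ forces many colors to have $X_i(\mathcal{P})$ of order $q\log n$'' uses the upper bound on $c_i$ as though it were a lower bound; it does not force any single $X_i$ to be large. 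Concretely, take $G=K_n$ with every edge a distinct color, so $s=\binom{n}{2}$ and each $X_i\in\{0,1\}$. Then $\mu(\mathcal{P})=s-p\,e_G(\mathcal{P})\ge s(1-p)$, while $s-t+1\approx s-n$. Hence $e\mu/(s-t+1)>e\bigl(1-p\bigr)\big/\bigl(1-2/(n-1)\bigr)>1$ for large $n$, and by Stirling your bound is at least $\bigl(e\mu/(s-t+1)\bigr)^{s-t+1}=e^{\Theta(n^{2})}$, not $n^{-2}$. In particular, your assertion that $\mu(\mathcal{P})$ is at most $|E(G[S_{\mathcal{P}}])|$ plus lower order is simply false in this regime.

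What the paper does instead is exactly the missing idea: rather than collapsing $\sum_{I}\exp\bigl(-p\sum_{i\in I}X_i\bigr)$ to a power of $\mu$, it uses convexity in each coordinate to push the vector $(X_1,\dots,X_s)$ to an extremal configuration with $k$ coordinates equal to $0$ and $s-k$ equal to the maximum $\lambda_1\delta/2$ (Claims~1 and~2). This converts the sum into an explicit hypergeometric-type sum that depends only on $k$, and then Lemma~\ref{EdgesConcavity} together with Proposition~\ref{OnesProp} lower-bound $s-k$ by roughly $n/2 + t/2$, giving the crucial gain $s-k-t\gtrsim y/2$ in the exponent. That gain of order $y\log n$ is what beats the $n^{O(y)}$ count of partitions. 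Your approach never isolates this quantity, and the factorial $(s-t+1)!$ cannot substitute for it when $\mu$ is comparable to $s-t+1$.
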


\begin{proof}

Fix a partition, $\mathcal{P}\in \Pi$, and for each $i\in [s]$, let  $\mathcal{C}_i^{\mathcal{P}}$ be the number of edges of color $i$ between parts in $\mathcal{P}$.  Then

\begin{align*}
\mathbb{P}\left( \cup_{\mathfrak{C}\in\mathcal{C}}\mathcal{B}_{\mathcal{P},\mathfrak{C}} \right)&\leq \sum_{\mathfrak{C}\in\mathcal{C}}\mathbb{P}\left( \mathcal{B}_{\mathcal{P},\mathfrak{C}} \right)\\
 &=\sum_{\mathfrak{C}\in\mathcal{C}}\prod_{i\in I_{\mathfrak{C}}}(1-p)^{|C_i|-|C_i\cap\mathcal{P}|}\\
  &=\sum_{I\in {[s]\choose s-t}}(1-p)^{\sum_{i\in I}(|C_i|-|C_i\cap\mathcal{P}|)}\\
   &\leq \sum_{I\in {[s]\choose s-t}}\exp(-p\sum_{i\in I}(|C_i|-|C_i\cap\mathcal{P}|))\\
 &=\sum_{I\in {[s]\choose s-t}}\exp\left( -p\sum_{i\in I}\mathcal{C}_i^{\mathcal{P}} \right).
\end{align*}

So we want to consider $\sum_{I\in {[s]\choose s-t}}\exp\left( -p\sum_{i\in I}\mathcal{C}_i^{\mathcal{P}} \right)$.  Let 
$$ f\left(\mathcal{C}_1^{\mathcal{P}},...,\mathcal{C}_s^{\mathcal{P}};t\right)=\sum_{I\in {[s]\choose s-t}}\exp\left( -p\sum_{i\in I}\mathcal{C}_i^{\mathcal{P}} \right). $$

We begin with two observations:
 
\noindent\textbf{Claim 1.}  
$$f\left(\mathcal{C}_1^{\mathcal{P}},\cdots \mathcal{C}_s^{\mathcal{P}};t\right)\leq f\left(\underbrace{0,\cdots 0}_{k-1\text{ times}},x^{*},\underbrace{\frac{\lambda_1\delta}{2},\cdots ,\frac{\lambda_1\delta}{2}}_{s-k\text{ times}};t\right),$$
where $0\leq x^{*}<\frac{\lambda_1\delta}{2}$ and $(s-k)\frac{\lambda_1\delta}{2}+x^{*}=e(\mathcal{P})$.

\noindent\textbf{Claim 2.}
$$ f\left( \underbrace{0,\cdots ,0}_{k-1\text{ times}},x^{*},\underbrace{\frac{\lambda_1\delta}{2},\cdots ,\frac{\lambda_1\delta}{2}}_{s-k\text{ times}};t \right)\leq f\left( \underbrace{0,\cdots ,0}_{k\text{ times}},\underbrace{\frac{\lambda_1\delta}{2},\cdots ,\frac{\lambda_1\delta}{2}}_{s-k\text{ times}};t \right).$$

The first of these follows from convexity (details are as in \cite{CHH}), while the second follows from monotonicity of $f$ in its variables.  

Assume that $t=n-y$ for $y\geq 3$, and let $x$ be as in Lemma \ref{EdgesConcavity}.

Notice that 
\begin{align}
s-k&\geq \frac{e(\mathcal{P})}{\lambda_1\delta/2}\nonumber\\
 &\geq  \frac{|E(G)|}{\delta}+\frac{x}{\lambda_1}\left(1-\frac{\lambda_1}{2}\right)\text{ by Lemma \ref{EdgesConcavity}}\nonumber\\
  &\geq \frac{|E(G)|}{\delta}+\left( t-\frac{y}{M-1}-1 \right)\left(1-\frac{\lambda_1}{2}\right)\cdot\frac{1}{\lambda_1}\text{ by Proposition \ref{OnesProp}}\nonumber\\
   &\geq \frac{|E(G)|}{\delta}+\frac{1}{2}\left( t-\frac{n-t}{M-1}-1 \right) \text{ as } \left(1-\frac{\lambda_1}{2}\right)\cdot\frac{1}{\lambda_1}\geq \frac{1}{2}.\label{eq:star}
\end{align}

Then

\begin{align}
f\left(\mathcal{C}_1^{\mathcal{P}},...,\mathcal{C}_s^{\mathcal{P}};t\right)&=\sum_{I\in {[s]\choose s-t}}\exp\left( -p\sum_{i\in I}\mathcal{C}_i^{\mathcal{P}} \right)\nonumber\\
  &\leq \sum_{r=\max\{0,t-(s-k)\}}^{\min\{ t,k \}}{k\choose r}{s-k\choose s-t-k+r}\exp\left( -p(s-k-t+r)\cdot\frac{\lambda_1\delta}{2} \right).\label{eq:CC1}
\end{align}

Here we are choosing $s-t$ colors to vanish.  First, we choose $r$ that we will not take, which means that we will take $k-r$.  Then we need to choose the remaining $s-t-(k-r)=s-t-k+r$ colors.  This gives us the following:

\begin{align}
\eqref{eq:CC1}  &\leq \sum_{r=\max\{0,t-(s-k)\}}^{\min\{ t,k \}}\exp\left( r\log k+(s-k-t+r)\log(s-k)-\frac{C\log n}{2}(s-k-t+r) \right)\nonumber\\
    &\leq  n\cdot \exp\left( (s-k-t)\log(s-k)-\frac{C\log n(s-k-t)}{2} \right)\nonumber\\
     &\leq  \exp\left( \log n+(s-k-t)\left( \log(n^2)-\frac{C\log n}{2} \right) \right)\nonumber\\
      &= \exp\left( \log n\left( 1+(s-k-t)\left(2-\frac{C}{2}\right)  \right) \right)\nonumber\\
       &=\exp\left( \log n\left( 1-(s-k-t)\left(\frac{C-4}{2}\right) \right) \right).\label{eq:CC}
\end{align}
Now we use \eqref{eq:star} to continue:
\begin{align}
  \eqref{eq:CC}     &\leq \exp\left( \log n\left( 1-\left( \frac{|E(G)|}{\delta}+\frac{1}{2}\left( t-\frac{n-t}{M-1}-1 \right)-t \right)\left( \frac{C-4}{2} \right) \right) \right)\nonumber\\
         &\leq \exp\left( \log n\left( 1-\left( \frac{n}{2}+\frac{1}{2}\left( t-\frac{y}{M-1}-1 \right)-t \right)\left( \frac{C-4}{2} \right) \right) \right)\nonumber\\
          & ~~~~~~~~~~~~~~~~~~~~~~~~~~~~~~~~~~~~~~~~~~~~~~~~~~~~~~~~~~\text{ since } |E(G)|\geq n\cdot\delta/2\nonumber\\
          &=\exp\left( \log n\left( 1-\left( \frac{n-t}{2}-\frac{y}{2(M-1)}-\frac{1}{2} \right)\left( \frac{C-4}{2} \right) \right) \right)\nonumber\\
           &=\exp\left( -\log n\left( \left( \frac{y}{2}-\frac{y}{2(M-1)}-\frac{1}{2}  \right)\left( \frac{C-4}{2} \right) -1 \right) \right)\nonumber\\
            &=\exp\left( -\log n\left( \left( y\left( \frac{1}{2}-\frac{1}{2(M-1)} \right)-\frac{1}{2} \right)\left( \frac{C-4}{2} \right) -1 \right) \right)\nonumber\\
             &\leq \exp\left( -\log n\left( \left( \frac{y}{4}-\frac{1}{2} \right)\left( \frac{C-4}{2} \right)-1 \right) \right).\nonumber
\end{align}

Observe that the number of partitions of the vertices into $t$ parts is
\begin{align*}
{n\choose t}\cdot t^{n-t}&={n\choose n-t}\cdot t^{n-t}\\
 &\leq  n^{n-t}\cdot t^{n-t}\\
  &\leq \exp(2\cdot (n-t)\log (n))\\
   &=\exp(2y\log (n)).
\end{align*}

Thus, 
\begin{align*}
\mathbb{P}\left[ \bigcup_{\mathcal{P}\in\Pi} \bigcup_{\mathfrak{C}\in\mathcal{C}} \mathcal{B}_{\mathcal{P},\mathfrak{C}} \right]&\leq  \exp\left( 2y\log (n)-\log n\left( \left( \frac{y}{4}-\frac{1}{2} \right)\left( \frac{C-4}{2} \right)-1 \right) \right)\\
 &=\exp\left( -\log n\left( \left( \frac{y-2}{4} \right)\left( \frac{C-4}{2} \right)-1-2y \right) \right).
\end{align*}

Notice that $\left( \frac{y-2}{4} \right)\left( \frac{C-4}{2} \right)-1-2y \geq 2$ for $y\geq 3$ and $C$ sufficiently large.  Thus, for $y\geq 3$, we have
$$ \mathbb{P}\left[ \bigcup_{\mathcal{P}\in\Pi} \bigcup_{\mathfrak{C}\in\mathcal{C}} \mathcal{B}_{\mathcal{P},\mathfrak{C}} \right]\leq n^{-2}. $$

\end{proof}

\subsection{Partitions where $n-2\leq t\leq n$.}

\begin{lemma} Assume that for each $i\in [t]$, $\vol(P_i)<\frac{1}{2}\vol(G)$.  Let $t\in \{ n-2, n-1,n \}$.  Then there are at least $t-1$ colors between parts with probability at least $1-n^{-2}$.
\end{lemma}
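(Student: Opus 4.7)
The plan is to adapt the union bound of Lemma \ref{ProbLemma} to the regime $y := n - t \in \{0, 1, 2\}$. The advantage here is that there are only polynomially many partitions to consider: exactly one when $t = n$, $\binom{n}{2}$ when $t = n-1$, and $\binom{n}{3} + \binom{n}{2}\binom{n-2}{2}/2 = O(n^4)$ when $t = n-2$, for a total of $O(n^4)$. Hence it suffices to obtain a per-partition failure probability of $n^{-c}$ for some $c > 6$. A second advantage is structural: for any such partition $\mathcal{P}$, the total number of within-part edges is at most $\binom{3}{2} = 3$ (achieved by a triangle inside a part of size $3$). Consequently, $\mathcal{C}_i^{\mathcal{P}} \geq c_i - 3$ for every color $i$, at most three colors have $\mathcal{C}_i^{\mathcal{P}} = 0$, and $e_G(\mathcal{P}) \geq e(G) - 3 \geq n\delta/2 - 3$.

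For each partition $\mathcal{P}$, I will bound
\[
\mathbb{P}\left(\bigcup_{\mathfrak{C} \in \binom{[s]}{s-t+1}} \mathcal{B}_{\mathcal{P}, \mathfrak{C}}\right)
\]
by running precisely the convexity (Claim 1) and monotonicity (Claim 2) reductions from the proof of Lemma \ref{ProbLemma}, so that I may assume WLOG that the $\mathcal{C}_i^{\mathcal{P}}$ are extremal, with most equal to $\lambda_1\delta/2$ and only a bounded number equal to zero. Carrying through the calculation that culminates in \eqref{eq:CC} yields a per-partition bound of the form $\exp\!\big(\log n \cdot (1 - (s - k - t)\tfrac{C-4}{2})\big)$, where the structural bound above gives $s - k \geq 2 e_G(\mathcal{P})/(\lambda_1\delta) \geq n/\lambda_1 - O(1/\delta)$, and hence $s - k - t \geq n(1/\lambda_1 - 1) + y - O(1)$.

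For $y \geq 1$, the $y$-term alone ensures $s - k - t \geq 1 - O(1)$ when $\lambda_1$ is near $1$, so taking $C$ sufficiently large pushes the per-partition bound below $n^{-6}$, and the union bound over the $O(n^4)$ partitions gives the claim. The case $y = 0$ (singleton partition) requires separate care: there is only one partition, but the bound from the calculation above is only marginally small. Here I will give a direct argument exploiting the rigidity of the extremal configuration. In the tight case ($\lambda_1 \approx 1$ and $e(G) \approx n\delta/2$), the constraint $c_i \leq \lambda_1\delta/2$ together with $\sum_i c_i = e(G)$ and $s \geq 2e(G)/(\lambda_1\delta) \approx n$ forces every $c_i$ to be at the maximum $\lambda_1\delta/2$, so each color is missing from $G_j$ with probability at most $(1-p)^{\lambda_1\delta/2} \leq n^{-C/2}$, and a union bound over the $s \leq O(n)$ colors gives the probability $\leq n^{-2}$ for $C \geq 6$. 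When $G$ is further from the extremal structure, either $s - k - t$ has genuine slack or a Chernoff bound on $|E_j|$ paired with the per-color cap from Lemma \ref{ColorClassSize}(i) provides a lower bound on the number of distinct colors in $G_j$ that exceeds $n-1$ with the required probability.

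The main obstacle is exactly the singleton-partition case $t = n$: the polynomial partition-count cushion is absent, and the Lemma \ref{ProbLemma}-style bound degenerates near $\lambda_1 = 1$. The key insight for overcoming this is that the inequality $s \geq n/\lambda_1$ becomes essentially tight precisely when $G$ is sparse, $\lambda_1 \approx 1$, and the color classes saturate the upper bound---and in exactly this regime each color is individually so large that it survives the subsampling to $G_j$ with probability $1 - n^{-C/2}$, permitting a crude union bound over colors to close the argument.
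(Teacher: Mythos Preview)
Your approach is genuinely different from the paper's. The paper does \emph{not} extend the union-bound machinery of Lemma~\ref{ProbLemma} to this regime; instead it introduces \emph{pseudocolor classes}: greedily bundle the colors $C_1,\dots,C_s$ into $n-1$ groups $D_1,\dots,D_{n-1}$, each of size at least $n/4$, apply Chernoff to each $D_k$ in each $G_j$ to get $|E(G_j)\cap D_k|\ge \tfrac{C}{8}\log n$, and then observe that the at most three within-part edges cannot kill any bundle. Since distinct bundles contribute distinct real colors, this yields $n-1\ge t-1$ colors between parts. This sidesteps the subset-of-colors union bound entirely.

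Your proposal has a real gap at $t\in\{n-1,n\}$ when $\lambda_1$ is close to~$1$. Take $G$ $\delta$-regular with $\lambda_1=1$ (e.g.\ $K_{\delta,\delta}$). Then $e(G)=n\delta/2$ exactly, and following Claims~1--2 one gets $s-k=\lfloor 2e(\mathcal P)/(\lambda_1\delta)\rfloor$, hence for the singleton partition $s-k\in\{n-1,n\}$ and $s-k-t\in\{-1,0\}$; the exponent in \eqref{eq:CC} is then $+\log n$, not $-\Theta(\log n)$. The same arithmetic gives $s-k-t\le 0$ at $t=n-1$. So the Lemma~\ref{ProbLemma} bound is vacuous exactly where you need it.

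Your ``tight versus non-tight'' patch is not a dichotomy and the fallback fails. The tight case (all $c_i=\lambda_1\delta/2$) is a single point, not an open set; an instance with, say, $n-2$ colors of size $\lambda_1\delta/2$ and $\delta$ colors of size~$1$ is neither tight nor does it give slack in $s-k-t$. Your Chernoff-plus-cap fallback certifies only
\[
\frac{|E_j|}{(1+\epsilon)\tfrac{C\log n}{2}} \;\approx\; \frac{2e(G)}{(1+\epsilon)\lambda_1\delta}\;=\;\frac{n}{1+\epsilon}\;<\;n-1
\]
colors when $\lambda_1=1$ and $G$ is regular, so it cannot close the gap. The missing idea is precisely the bundling trick: small color classes may individually vanish in $G_j$, but grouping them into $n-1$ macroscopic pseudocolors guarantees each group survives with room to absorb the $\le 3$ lost edges.
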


\begin{proof}

Fix a partition with $t$ parts where $t\in \{n-2,n-1,n\}$.  We want to show that there are at least $t-1$ colors between the parts of our partition.  Unfortunately it is impossible to prove a lower bound on the number of edges in color class $C_i$ in some $G_j$ since $C_i$ may be too small.  To circumvent this, instead of considering each individual color class, we combine color classes to create pseudocolor classes.  As shown in \cite{CHH}, we can construct $n-1$ pseudocolor classes $D_1,...,D_{n-1}$ such that for each $k\in [n-1]$, 
$$ D_k=\left( \cup_{j=1}^{\ell} D_j \right)\backslash\left( \cup_{i=1}^{k-1} D_i \right), $$
where $\ell$ is the least integer such that $\left| \left( \cup_{j=1}^{\ell} C_j \right)\backslash\left( \cup_{i=1}^{k-1} D_i \right) \right|\geq n/4$.  Fix $i\in [n-1]$ and $j\in [t]$.  Let $Z_i^{(j)}=|E(G_i)\cap D_j|$.  Then
$$ \mathbb{E}[Z_i^{(j)}]\geq p\cdot\frac{n}{4}=\frac{n\cdot C\log n}{4\lambda_1\delta}\geq\frac{n\cdot C\log n}{4\delta}\geq\frac{C}{4}\log n. $$

Observe,
\begin{align*}
\mathbb{P}\left(Z_i^{(j)}\leq \frac{C}{8}\log n \right)&\leq \mathbb{P}\left( Z_i^{(j)}\leq\frac{1}{2}\mathbb{E}[Z_i^{(j)}] \right)\\
 &\leq \exp\left( -\frac{1}{8}\cdot\mathbb{E}[Z_i^{(j)}]\right)\\
  &\leq \exp\left( -\frac{1}{8}\cdot\frac{C}{4}\log n \right)\\
   &= \exp\left( -\frac{C\log n}{32} \right)\\
      &\leq  n^{-4}
\end{align*}
for $C$ and $n$ sufficiently large.  Thus,

\begin{align*}
\bigcup_{j\in [t]}\bigcup_{i\in [n-1]}\mathbb{P}\left(Z_i^{(j)}\leq \frac{C}{8}\log n \right) &\leq  t(n-1)\cdot n^{-4}\\
 &< n^2\cdot n^{-4}\\
  &= n^{-2}.
\end{align*} 

This shows that in each $G_j$ there are at least $\frac{C}{8}\log n$ edges within each psuedocolor class with probability at least $1-n^{-2}$.  If $t=n$, then each part in the partition consists of a single vertex, so none of the egdes can be contained within the parts.  If $t=n-1$, then there is one part of size two and the rest are of size one.  In this case there is at most one edge within the parts.  If $t=n-2$, then there are either two parts of size $2$ and the rest of size $1$ or there is one part  of size $3$ and the rest have size $1$.  Thus, there are at most three edges contained within the parts.  

Therefore, for $n-2\leq t\leq n$, there are at least $\log n$ edges within each of the $n-1$ pseudocolor classes left between parts of a partition with probability at least $1-n^{-2}$.  Since $n-1\geq t$, we have that there are at least $t-1$ colors between parts with probability at least $1-n^{-2}$.

\end{proof}

\section{Applications and Discussion} \label{sec5}

While Theorem \ref{t1} applies to all sufficiently large graphs (as a function of $\lambda_1$) it is strongest when $\lambda_1$ is close to one.  This is when the requirements on the color classes are weakest and the conclusion is strongest.  Fortunately there are some graph classes satisfying this.  The only graphs with $\lambda_1=1$ are complete bipartite graphs.  The corollary below follows immediately from Theorem \ref{t1} since $\lambda_1(K_{n,m})=1$.

\begin{cor}
Let $G$ be an edge-colored copy of $K_{n,m}$ where $m\geq n$ and $n\geq C\log (n+m)$ for $n,m,$ and $C$ sufficiently large in which each color appears on at most $n/2$ edges.  Then $G$ contains at least $\lfloor \frac{n}{C\log (n+m)} \rfloor$ edge-disjoint rainbow spanning trees.
\end{cor}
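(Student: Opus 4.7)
The plan is to verify that $K_{n,m}$ with $m \geq n$ fits directly into the hypotheses of Theorem \ref{t1}, so the corollary reduces to a short calculation. First I would identify the relevant parameters. The complete bipartite graph $K_{n,m}$ has $N := n+m$ vertices; the $n$ vertices on the small side have degree $m$, while the $m$ vertices on the large side have degree $n$. Since $m\geq n$, the minimum degree is $\delta = n$. The assumption $n \geq C\log(n+m)$ therefore becomes exactly $\delta \geq C \log N$, matching the degree hypothesis of Theorem \ref{t1}.

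Next I would note that $\lambda_1(K_{n,m}) = 1$. This is standard: the normalized Laplacian spectrum of $K_{n,m}$ is $\{0, 1, 1, \ldots, 1, 2\}$ with $1$ of multiplicity $N-2$. One can verify this directly by producing eigenvectors if desired -- the all-ones vector gives eigenvalue $0$; the signed indicator of the bipartition (scaled appropriately by $\sqrt{d_v}$) gives eigenvalue $2$; and any vector supported on one side that is orthogonal (in the degree-weighted sense) to the all-ones vector on that side is easily checked to give eigenvalue $1$, giving a space of dimension $(n-1)+(m-1) = N-2$.

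With $\delta = n$ and $\lambda_1 = 1$, the bound $\delta \cdot \lambda_1/2 = n/2$ is exactly the cap imposed on color class sizes in the corollary's hypothesis. Applying Theorem \ref{t1} with $N$ playing the role of the vertex count yields at least $\lfloor \delta \cdot \lambda_1 / (C\log N)\rfloor = \lfloor n/(C\log(n+m))\rfloor$ edge-disjoint rainbow spanning trees, which is the stated conclusion.

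There is no substantive obstacle in this argument; the only subtlety worth flagging explicitly is the notational collision between the $n$ of Theorem \ref{t1} (the number of vertices of the host graph) and the $n$ of the corollary (the size of the smaller side of the bipartition), which is why the ``$\log n$'' in the theorem statement becomes ``$\log(n+m)$'' in the corollary.
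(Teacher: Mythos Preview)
Your proposal is correct and matches the paper's approach exactly: the paper simply notes that $\lambda_1(K_{n,m})=1$ and says the corollary follows immediately from Theorem~\ref{t1}. Your version is in fact more detailed, spelling out the identification $\delta=n$, the spectrum of the normalized Laplacian, and the notational shift from $n$ to $N=n+m$.
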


While non-complete graphs have $\lambda_1<1$, there are several natural classes of graphs which have $\lambda_1$ close to one.  First, consider random $d$-regular graphs.  Friedman, Kahn, and Szemer\'{e}di gave a bound on the eigenvalues of such graphs with $d$ fixed in \cite{FriedmanKahnSzemeredi}.  This was a combination of two papers -- one by Friedman, and the other by Kahn and Szemer\'{e}di.  Their techniques were different, and in \cite{BroderFriezeSuenUpfal}, Broder, Frieze, Suen, and Upfal showed that Kahn and Szemer\'{e}di's technique could be applied to more dense random $d$-regular graphs.  More recently, Cook, Goldstein, and Johnson improved the range at which the eigenvalue bound was known.

\begin{theorem}\label{randomdreg} (\cite{CookGoldsteinJohnson})
Let $A$ be the adjacency matrix of a uniform random $d$-regular graph on $n$ vertices.  Let $\lambda_0(A)\geq\cdots\geq\lambda_{n-1}(A)$ be the eigenvalues of $A$, and let $\lambda(A)=\max\{\lambda_1(A),-\lambda_{n-1}(A)\}$.  For any $C_0,K>0$, there exists $\alpha>0$ such that if $1\leq d\leq C_0(n^{2/3})$, then $\mathbb{P}(\lambda(A)\leq \alpha\sqrt{d})\geq 1-n^{-K}$ for $n$ sufficiently large.
\end{theorem}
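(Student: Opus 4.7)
The plan is to follow the Kahn--Szemer\'edi approach, as extended by Broder, Frieze, Suen, and Upfal to the dense regime. I would work in the configuration (pairing) model of random $d$-regular graphs and transfer statements to the uniform model via a standard contiguity argument. The starting point is that $\mathbf{1}$ is always an eigenvector with eigenvalue $d$, so
\[
\lambda(A) = \sup\{\,x^\top A y : \|x\|=\|y\|=1,\ x \perp \mathbf{1},\ y \perp \mathbf{1}\,\}.
\]
Replacing the unit sphere orthogonal to $\mathbf{1}$ by a $\tfrac{1}{2}$-net $\mathcal{N}$ of cardinality at most $\exp(cn)$, it suffices to show $|x^\top A y| \leq \tfrac{\alpha}{2}\sqrt{d}$ for every $x,y \in \mathcal{N}$ with probability at least $1 - n^{-K}$; a union bound over $\mathcal{N}\times\mathcal{N}$ then costs only a factor $\exp(2cn)$ that we will need to absorb into the individual tail bounds.

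For fixed $x,y \in \mathcal{N}$, decompose $x^\top A y = \sum_{i,j} A_{ij} x_i y_j$ into a light part $L$ where $|x_i y_j| \leq \sqrt{d}/n$ and a heavy part $H$ where $|x_i y_j| > \sqrt{d}/n$. On $L$, each summand is bounded, the mean is essentially zero (since $x,y \perp \mathbf{1}$), and a Bernstein/Azuma-type inequality --- exploiting that edges in the configuration model are nearly independent --- gives a tail bound of order $\exp(-\Omega(\alpha^2 n))$, which beats the $\exp(2cn)$ union bound cost once $\alpha$ is sufficiently large. For $H$ one proceeds deterministically on the event that bipartite discrepancy holds: with probability at least $1 - n^{-K-1}$, every pair $(S,T)$ of vertex subsets satisfies $e(S,T) = O(d\,|S|\,|T|/n)$ with a manageable additive error. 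Given this event, grouping the coordinates of $x$ and $y$ into dyadic level sets and applying the discrepancy estimate to each pair of levels yields $\sum_H |A_{ij} x_i y_j| = O(\sqrt{d})$ up to a logarithmic overhead that can be absorbed by choosing $\alpha$ larger.

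The main obstacle is verifying the bipartite discrepancy event throughout the range $d \leq C_0 n^{2/3}$. For sparse $d$, classical moment or switching methods suffice, but in the near-$n^{2/3}$ regime one must control the probability that the configuration model produces a bipartite subgraph whose edge count exceeds its expectation by more than a constant factor, and this is precisely where the argument of Cook, Goldstein, and Johnson improves on the earlier estimates of Broder--Frieze--Suen--Upfal. I would expect their refinement to rely on a careful enumeration of configurations together with a sharp large-deviation bound for hypergeometric-type counts, extending the range in which the Kahn--Szemer\'edi machinery can be driven to completion. Once the discrepancy event is established uniformly over subset sizes, the rest of the proof is the essentially routine light/heavy decomposition above, and the net probability budget $n^{-K}$ is obtained by combining the discrepancy bound with the union bound over $\mathcal{N}\times\mathcal{N}$.
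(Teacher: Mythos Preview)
The paper does not prove this statement at all: Theorem~\ref{randomdreg} is quoted verbatim from Cook, Goldstein, and Johnson and used as a black box to derive a corollary about rainbow spanning trees in random regular graphs. There is therefore nothing in the present paper to compare your proposal against.

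That said, your sketch does capture the overall Kahn--Szemer\'edi framework that Cook--Goldstein--Johnson also follow, including the $\epsilon$-net reduction and the light/heavy splitting. Where your outline departs from their actual argument is in locating the main difficulty. You place it in the bipartite discrepancy bound governing the heavy couples and expect the refinement there to come from ``careful enumeration of configurations together with a sharp large-deviation bound for hypergeometric-type counts.'' In fact, the discrepancy analysis for heavy couples is not the bottleneck in extending the range of $d$; the new ingredient in Cook--Goldstein--Johnson is their treatment of the \emph{light} couples. The line ``a Bernstein/Azuma-type inequality --- exploiting that edges in the configuration model are nearly independent --- gives a tail bound of order $\exp(-\Omega(\alpha^2 n))$'' is precisely the step that does not go through routinely once $d$ is a growing power of $n$: the dependencies among edges in the configuration/permutation model are no longer negligible, and prior martingale or switching arguments do not yield the needed concentration uniformly. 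Cook--Goldstein--Johnson replace this step with a size-biased coupling argument (as the title of their paper signals), which is what pushes the admissible range of $d$ up to $n^{2/3}$. So your proposal has the right skeleton but misidentifies both where the obstacle lies and what technique overcomes it.
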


In a $d$-regular graph, we have that $\lambda_1(\mathcal{L})=1-\frac{1}{d}\lambda_1(A)$.  Therefore, this result gives us a lower bound on $\lambda_1(\mathcal{L})$, which we can use to apply Theorem \ref{t1}.

\begin{cor}
Let $G$ be an edge-colored uniform random $d$-regular graph in which \\
$C\log n\leq d\leq C \cdot n^{2/3}$ (for $C$ and $n$ sufficiently large).  Then there exists $\alpha>0$ such that if each color class has size at most $d\cdot\left( 1-\frac{\alpha}{\sqrt{d}}\right)/2$, then $G$ contains at least $\left\lfloor \frac{d-\alpha\sqrt{d}}{C\log n} \right\rfloor$ edge-disjoint rainbow spanning trees with high probability.
\end{cor}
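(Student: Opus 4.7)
The plan is to obtain the corollary as a direct application of Theorem \ref{t1}, using Theorem \ref{randomdreg} to produce a high-probability lower bound on $\lambda_1$ of the normalized Laplacian of the random $d$-regular graph. The first step I would take is to translate the spectral information from the adjacency matrix $A$ to $\mathcal{L}$. Since $G$ is $d$-regular, we have $\mathcal{L}=I-\tfrac{1}{d}A$, so the eigenvalues are related by $\lambda_i(\mathcal{L})=1-\tfrac{1}{d}\mu_{n-1-i}(A)$ after reversing the order. In particular, $\lambda_1(\mathcal{L})=1-\mu_1(A)/d$.

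Next I would invoke Theorem \ref{randomdreg} with any fixed $K>0$ (say $K=2$) to obtain an $\alpha>0$ such that, with probability at least $1-n^{-K}$,
$$\mu_1(A)\leq \lambda(A)\leq \alpha\sqrt{d},$$
so that $\lambda_1(\mathcal{L})\geq 1-\alpha/\sqrt{d}$. Note $C\log n\leq d\leq C\cdot n^{2/3}$ is exactly the range in which Theorem \ref{randomdreg} applies, so the use is legitimate.

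Having fixed a realization of $G$ on which the eigenvalue bound holds, the remainder of the argument is deterministic. The minimum degree is $\delta=d\geq C\log n$, and by assumption each color class has size at most
$$\frac{d}{2}\left(1-\frac{\alpha}{\sqrt{d}}\right)\leq \frac{\delta\cdot\lambda_1(\mathcal{L})}{2},$$
so the hypotheses of Theorem \ref{t1} are met. The conclusion supplies at least
$$\left\lfloor\frac{\delta\cdot\lambda_1(\mathcal{L})}{C\log n}\right\rfloor\geq \left\lfloor\frac{d-\alpha\sqrt{d}}{C\log n}\right\rfloor$$
edge-disjoint rainbow spanning trees, as desired. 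Since the eigenvalue bound holds with probability at least $1-n^{-2}$, so does the overall conclusion.

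There is no serious obstacle here; the only subtlety is conceptual, namely keeping the two probabilistic frameworks separate. Theorem \ref{randomdreg} gives a high-probability event over the random graph model, while Theorem \ref{t1} is a deterministic statement once the spectral hypothesis is known. Conditioning on the event that $\lambda(A)\leq\alpha\sqrt{d}$ cleanly decouples these two, and the conclusion inherits its "with high probability" quantifier solely from the random graph model.
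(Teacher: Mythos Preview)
Your proof is correct and follows the same approach as the paper, which simply notes that for a $d$-regular graph $\lambda_1(\mathcal{L})=1-\tfrac{1}{d}\lambda_1(A)$, so Theorem~\ref{randomdreg} gives a lower bound on $\lambda_1(\mathcal{L})$ that feeds directly into Theorem~\ref{t1}. Your write-up is in fact more careful than the paper's one-sentence justification, particularly in separating the randomness of the graph model from the deterministic application of Theorem~\ref{t1}.
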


Our result applies to some graphs with very skewed degree distributions.  The graph $G_{n,p}$ is the graph on $n$ vertices in which each edge appears with probability $p$.  This can be generalized in the following way.  For a sequence $\mathbf{w}=(w_1,\cdots ,w_n)$, let $\rho=\frac{1}{\sum_{i=1}^nw_i}$.  Then $G(\mathbf{w})$ is a random graph in which we label the vertices $v_1,...,v_n$, and the edge $v_iv_j$ appears with probability $w_iw_j\rho$ (\cite{ComplexGraphsandNetworks}).  (Here, we allow for loops.)  In the graph $G(\mathbf{w})$, it is easy to see that $\mathbb{E}[\deg(v_i)]=\omega_i$.  Notice that if we take $\mathbf{w}=(np,\cdots np)$, we get $G(\mathbf{w})=G_{n,p}$.

It is well known (\cite{FurediKomlos}) that $\lambda_1(G_{n,p})\geq 1-\frac{2}{\sqrt{np}}$ with high probability.  So for all $\epsilon>0$, $\lambda_1(G_{n,p})\geq 1-\epsilon$ for $n$ sufficiently large.  Also, $\delta(G_{n,p})\geq (1-\epsilon )np$ if $np\gg \log^2n$.  These results also apply to irregular graphs.  For instance, consider $G(\mathbf{w})$.  Fix $\epsilon>0$.  If $\mathbf{w}_{\min}\gg\log^2n$, then $\delta\geq (1-\epsilon)\mathbf{w}_{\min}$ with high probability for $n$ large enough.  Also, if $\mathbf{w}_{\min}\gg \log^2 n$, then $ \lambda_1(G(\mathbf{w}))\geq 1-\epsilon $ with high probability.  This is implied by the following result of Chung, Lu, and Vu.


\begin{theorem} (\cite{ChungLuVu})
For a random graph with given expected degrees, if the minimal expected degree $\mathbf{w}_{\min}$ satisfies $\mathbf{w}_{\min}\gg\log^2 n$, then almost surely the eigenvalues of the Laplacian satisfy
$$ \max_{i\neq 0} |1-\lambda_i|\leq (1+o(1))\frac{4}{\sqrt{\overline{\mathbf{w}}}}+\frac{g(n)\log^2 n}{w_{\min}}, $$
where $\overline{\mathbf{w}}=\frac{\sum_{i=1}^n w_i}{n}$ is the average expected degree and $g(n)$ is a function tending to infinity (with $n$) arbitrarily slowly.

\end{theorem}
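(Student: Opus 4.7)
The plan is to reduce the theorem to a spectral perturbation statement about a random matrix with independent but non-identically-distributed entries. Writing $\mathcal{L} = I - D^{-1/2} A D^{-1/2}$ with $A$ the adjacency matrix of $G(\mathbf{w})$ and $D$ the diagonal degree matrix, controlling $|1 - \lambda_i|$ for $i \neq 0$ reduces to bounding the spectrum of $M := D^{-1/2} A D^{-1/2}$ away from its dominant eigenvalue. Since $\e[A_{ij}] = \rho w_i w_j$, the expected adjacency is the rank-one matrix $\rho\,\mathbf{w}\mathbf{w}^T$; letting $\tilde D = \mathrm{diag}(w_i)$ one checks that $\tilde D^{-1/2} \e[A] \tilde D^{-1/2} = \rho\,\sqrt{\mathbf{w}}\sqrt{\mathbf{w}}^T$, whose unique nonzero eigenvalue equals $\rho \sum_i w_i = 1$. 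Thus the deterministic approximation has spectrum $\{1, 0, 0, \ldots, 0\}$, matching the expected normalized Laplacian spectrum $\{0, 1, 1, \ldots, 1\}$, and the problem becomes quantifying how far $M$'s eigenvalues stray from those of the rank-one approximation.

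Next I would decompose the perturbation into a random-entry part $D^{-1/2}(A - \e[A]) D^{-1/2}$ and a degree-concentration part arising from replacing $D$ by $\tilde D$. The degree error I would handle directly via Chernoff: when $w_{\min} \gg \log^2 n$, one has $|d_i - w_i| = O(\sqrt{w_i \log n})$ uniformly with high probability, so $D^{-1/2} = \tilde D^{-1/2}(I + E)$ entrywise with $\|E\|_\infty = O(\sqrt{\log n / w_{\min}})$. Propagating $E$ through the symmetrized product contributes the $g(n)\log^2 n / w_{\min}$ term in the bound, with $g(n) \to \infty$ arbitrarily slowly absorbing the logarithmic slack from the Chernoff deviation and from a union bound over the $n$ coordinates.

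The main step, and the principal obstacle, is to bound the operator norm of the fully-normalized centered adjacency matrix $\tilde D^{-1/2}(A - \e[A]) \tilde D^{-1/2}$. Each entry at $(i,j)$ has variance approximately $w_i w_j \rho / (w_i w_j) = \rho$, so heuristically a Wigner-semicircle argument predicts an operator norm of order $\sqrt{n\rho} = 1/\sqrt{\bar{w}}$. To make this rigorous for independent, non-identically-distributed, possibly heavy-tailed Bernoulli entries, I would first truncate the rare large summands (killing vertices whose degree exceeds $w_i$ by more than a poly-log factor, which happens with probability $o(1)$) and then bound $\e\bigl[\mathrm{tr}\,(\tilde D^{-1/2}(A - \e[A])\tilde D^{-1/2})^{2k}\bigr]$ by enumerating closed walks on $[n]$ weighted by products of entry variances. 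Each step contributes a factor of $\rho$, and the standard reduction to Dyck-path counting produces Catalan moments giving an operator-norm bound of $(2+o(1))\sqrt{n\rho}$; doubling this to accommodate symmetric excursions above and below the bulk produces the factor $4$ in the statement.

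Finally, Weyl's inequality combines the two bounds: the spectrum of $M$ lies within the operator norm of the total perturbation from $\{1, 0, \ldots, 0\}$, so every nontrivial $\lambda_i$ of $\mathcal{L}$ lies within the claimed distance of $1$. The trace-method step is the crux --- off-the-shelf Wigner-type results assume i.i.d.\ entries, so one must adapt the closed-walk enumeration to the common per-step weight $\rho$, and one must choose $k = \Theta(\log n / \log\log n)$ carefully so that Markov's inequality converts the $2k$-th moment bound into a high-probability operator-norm bound without degrading the leading constant. Once that is in place the other ingredients (Chernoff for degrees, Weyl for combination) are routine.
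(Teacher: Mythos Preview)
The paper does not contain a proof of this theorem at all: it is quoted verbatim from Chung, Lu, and Vu (\cite{ChungLuVu}) as a black-box input to derive the subsequent corollary about rainbow spanning trees in $G(\mathbf{w})$. There is therefore no ``paper's own proof'' to compare your proposal against.

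That said, your sketch is a faithful outline of the argument Chung, Lu, and Vu actually give in their paper. They too separate the error into a degree-fluctuation piece (handled by Chernoff, contributing the $g(n)\log^2 n / w_{\min}$ term) and a centered random-matrix piece $C := \tilde D^{-1/2}(A - \e[A])\tilde D^{-1/2}$, and they bound $\|C\|$ by the trace/moment method, expanding $\e[\mathrm{tr}\,C^{2k}]$ as a sum over closed walks and reducing to Catalan-type counts. Your identification of the rank-one structure of $\tilde D^{-1/2}\e[A]\tilde D^{-1/2}$ and the use of Weyl's inequality at the end are exactly right. The one place where your sketch is loose is the explanation of the constant $4$: it does not arise from ``doubling to accommodate symmetric excursions,'' but rather from slack in the combinatorial walk-counting (the moment bound gives $2$ from the Catalan asymptotics, and an additional factor of $2$ is lost in passing from the trace bound to a high-probability operator-norm bound and in absorbing lower-order walk contributions). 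This is a cosmetic point; the structure of your argument matches the original.
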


This bound on $\lambda_1(G(\mathbf{w}))$ gives us the following corollary of Theorem \ref{t1}.

\begin{cor}
Fix $\epsilon>0$.  Assume that $\mathbf{w}_{\min}\gg \log^2 n$ and $G(\mathbf{w})$ is edge-colored so that each color class has size at most $\frac{\mathbf{w}_{\min}\cdot\left( 1-\epsilon\right)}{2}$.  Then for $n$ and $C$ sufficiently large, a graph $G\in G(\mathbf{w})$ contains at least $\left\lfloor \frac{\mathbf{w}_{\min}\cdot\left( 1-\epsilon \right)}{C\log n} \right\rfloor$ edge-disjoint rainbow spanning trees with probability $1-o(1)$.
\end{cor}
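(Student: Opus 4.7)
The plan is to deduce this corollary as a direct consequence of Theorem \ref{t1}, by showing that the hypotheses of Theorem \ref{t1} are satisfied with high probability for $G \in G(\mathbf{w})$ whenever $\mathbf{w}_{\min} \gg \log^2 n$. The two things I need to verify are (a) that $\delta(G) \geq C \log n$, and (b) that each color class has size at most $\delta \lambda_1 / 2$. Both reduce to obtaining good high-probability lower bounds on $\delta$ and $\lambda_1$.

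First, I would establish the degree concentration. Each $\deg(v_i)$ is a sum of independent Bernoulli indicators with means $w_i w_j \rho$, so $\mathbb{E}[\deg(v_i)] = w_i$ (allowing the loop, which is a negligible correction). Since $w_i \geq \mathbf{w}_{\min} \gg \log^2 n$, a standard Chernoff bound gives $\deg(v_i) \geq (1 - \epsilon_1) \mathbf{w}_{\min}$ with probability at least $1 - n^{-3}$ for any fixed small $\epsilon_1 > 0$; a union bound over the $n$ vertices then yields $\delta(G) \geq (1 - \epsilon_1) \mathbf{w}_{\min}$ with probability $1 - o(1)$. (The paper notes this estimate.) In particular, $\delta(G) \gg \log^2 n \geq C \log n$ for $n$ sufficiently large, so hypothesis (a) of Theorem \ref{t1} is satisfied.

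Second, I apply the Chung--Lu--Vu theorem cited just above. Under $\mathbf{w}_{\min} \gg \log^2 n$ we have $\overline{\mathbf{w}} \geq \mathbf{w}_{\min} \to \infty$, so $4/\sqrt{\overline{\mathbf{w}}} = o(1)$; and choosing $g(n)$ to tend to infinity slowly enough compared to $\mathbf{w}_{\min}/\log^2 n$ makes $g(n) \log^2 n / \mathbf{w}_{\min} = o(1)$ as well. Consequently, with probability $1 - o(1)$, $\lambda_1 \geq 1 - \epsilon_2$ for any prescribed $\epsilon_2 > 0$. Choosing $\epsilon_1, \epsilon_2$ small enough that $(1 - \epsilon_1)(1 - \epsilon_2) \geq 1 - \epsilon$, we obtain $\delta \cdot \lambda_1 \geq (1 - \epsilon) \mathbf{w}_{\min}$ with high probability. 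The hypothesis that each color class has size at most $\mathbf{w}_{\min}(1-\epsilon)/2 \leq \delta \lambda_1 / 2$ is then exactly what Theorem \ref{t1} requires, verifying (b).

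Finally, Theorem \ref{t1} delivers at least $\lfloor \delta \lambda_1 / (C \log n) \rfloor \geq \lfloor \mathbf{w}_{\min}(1-\epsilon)/(C \log n) \rfloor$ edge-disjoint rainbow spanning trees, which is the conclusion. There is no real obstacle here beyond careful parameter bookkeeping: the substantive work is all contained in Theorem \ref{t1} and the Chung--Lu--Vu eigenvalue bound. The one spot that deserves attention is ensuring that the probability $1 - o(1)$ from the degree concentration, the spectral estimate, and the success event of Theorem \ref{t1} can all be combined by a union bound, which is immediate since each holds with probability $1 - o(1)$ individually.
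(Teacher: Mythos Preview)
Your proposal is correct and follows exactly the approach the paper indicates: the paper does not write out a proof but simply notes that $\delta \geq (1-\epsilon)\mathbf{w}_{\min}$ and $\lambda_1 \geq 1-\epsilon$ hold with high probability under $\mathbf{w}_{\min} \gg \log^2 n$ (the latter via the Chung--Lu--Vu theorem), and then invokes Theorem~\ref{t1}. One small remark: Theorem~\ref{t1} is a deterministic statement once its hypotheses are met, so there is no separate ``success event'' for it to include in the final union bound---only the degree and eigenvalue estimates are probabilistic.
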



One can also wonder about the sharpness of our result and the dependence on $\lambda_1$.  If $\lambda_1$ is small, then Cheeger's inequality implies there is a sparse cut, which limits the number of disjoint spanning trees the graph can contain, let alone the number of disjoint rainbow spanning trees.  On the other hand, certainly no more than $\delta$ edge disjoint spanning trees are possible in any graph with minimum degree $\delta$.  It seems plausible that the logarithmic factor could be removed in our lower bound on the number of rainbow spanning trees.  In the more specialized situation of proper edge colorings of $K_n$, this is what \cite{Horn} does.  

In our proof of the main result, $\lambda_1$ was mostly used to lower bound the isoperimetric constant by Cheeger's inequality.  It seems likely that $\lambda_1$ could be replaced by the isoperimetric constant.  However, the isoperimetric constant is practically impossible to compute so stating the hypothesis in terms of $\lambda_1$ seems most natural.    

It's also possible that the bound on the size of the color classes could be improved.  It is clear that if color classes are allowed to be larger than $\frac{\delta}{2}$ in a $\delta$ regular graph, that rainbow spanning trees can be avoided entirely.  In particular, for complete bipartite graphs (where $\lambda_1=1$) our size bound on color classes is correct.  It is less clear that the factor of $\lambda_1$ appearing in our bound is actually required.  We suspect that this dependence can be somewhat weakened, though it is unclear to us exactly what the dependence (if any!) on $\lambda_1$ the size of the color classes should have. Our proof rather naturally leads to bounding color classes by $\frac{\lambda_1 \delta}{2}$ as we have done.


%
%


\end{document}